\theoremstyle{definition}
\newtheorem{defi}{Definition}
\newtheorem{ex}[defi]{Example}
\theoremstyle{remark}
\newtheorem{rem}[defi]{Remark}
\theoremstyle{plain}
\newtheorem{thm}[defi]{Theorem}
\newtheorem*{B}{Baire Category Theorem}
\newtheorem{lem}[defi]{Lemma}
\newtheorem{cor}[defi]{Corollary}
\title{Nowhere dense competing holes in open dynamical systems}
\author{F. Ciavattini$^*$, T.H. Steele$^{**}$}
\date{\small{$^*$Scuola di Scienze e Tecnologie, Università degli Studi di Camerino, Italy, e-mail: filippo.ciavattini@studenti.unicam.it}\\
$^{**}$ Department of Mathematics, Weber State University,  Ogden UT,  84408, USA, e-mail: thsteele@weber.edu}
\begin{document}
\maketitle
\begin{abstract}
Let $\mathcal{M}$ be a compact metric space with no isolated points, and $f:\mathcal{M}\longrightarrow\mathcal{M}$ a homeomorphism. Consider a sequence of shrinking open balls $\{B^i_n\}_{n\in\mathbb{N}}^{i\in\mathbb{N}}$ with centers $\{p_i\}_{i=1}^\infty\subseteq\mathcal{M}$ and radii $\{\rho^i_n\}_{n=1}^\infty$. For every point $x\in\mathcal{M}$ and $n\in\mathbb{N}$, consider which ball the trajectory $\{x,f(x),f^2(x),\dots\}$ of the point first visits. We find that whenever the closure of $\{p_i\}_{i=1}^\infty$ is nowhere dense, and with very minor restrictions on $\{\rho_n^i\}_{n\in\mathbb{N}}^{i\in\mathbb{N}}$, the typical trajectory $\{f^k(x)\}_{k=0}^\infty$ will first visit, for each $i$, the ball $B^i_n$, for infinitely many $n$. This is never the case, should $\{p_i\}_{i=1}^\infty$ be somewhere dense.\\

\noindent \textit{Keywords}: Open Dynamical System, Topological Dynamics, Transitive Homeomorphism, Baire category.

\noindent MSC2020: 37B05, 37B20, 18F60, 54E52.
\end{abstract}

\section{Introduction}
Suppose that $\mathcal{M}$ is a compact metric space, and $f:\mathcal{M}\longrightarrow\mathcal{M}$ is transitive. Take $B_1,B_2,\dots,B_m$ to be disjoint open balls in $\mathcal{M}$. If $x$ is a point in $\mathcal{M}$ for which the trajectory $\mathcal{O}^+(x)=\{x,f(x),f^2(x),\dots\}$ is dense, then, regardless of the size of the balls $B_i$, there will be a point $f^k(x)$ in $\mathcal{O}(x)$ that will fall into one of the balls. In \cite{dfr}, the authors consider not just one set of the balls $\{B_i\}_{i=1}^m$, but a sequence of shrinking balls. In particular, let $\{p_i\}_{i=1}^m$ be a set of points in $\mathcal{M}$, and $\{\rho^i_n\}_{n\in\mathbb{N}}^{1\leq i\leq m}$ a collection of decreasing sequences such that, for each $i$, $\rho^i_{n+1}<\rho^i_n$ for all $n$, and $\lim_{n\longrightarrow\infty}\rho^i_n=0$.
For each $n$, we now have a new set of balls $\{B_n^1,B_n^2,\dots,B_n^m\}$, where $B^i_n=B_{\rho^i_n}(p_i)$. Again, for each $n$, the dense trajectory $\mathcal{O}(x)$ will eventually visit one of the balls $\{B_n^i\}^{1\leq i\leq m}$. But as $n$ changes, the first ball $B^i_n$ that is visited by the trajectory $\mathcal{O}(x)$ could also change.
 The main result of \cite{dfr} is surprising. The authors establish the existence of
 \begin{enumerate}
 	\item a residual set $\mathcal{A}\subseteq\mathcal{M}^m$, with each element of $\mathcal{A}$ providing centers $\{p_1,p_2,\dots,p_m\}$, and
 	\item a residual set $\mathcal{B}$ in $\mathcal{M}$, each of whose elements $x$ has a dense trajectory $\mathcal{O}(x)$ in $\mathcal{M}$, such that for each $1\leq i \leq m$, the trajectory $\mathcal{O}(x)$ will first visit $B^i_n$, for infinitely many values $n$.  \end{enumerate}
Moreover, this is true regardless of the rate at which the radii shrink to zero. The only condition placed on the collection of radii $\{\rho^i_n\}_{n\in\mathbb{N}}^{1\leq i\leq m}$ is that $\overline{B^i_1}\cap\overline{B^j_1}=\emptyset$, whenever $i\neq j$. This is a natural restriction to make, as it insures that the closure of the balls $B^i_n$, $1\leq i\leq m$, are disjoint for each $n$. Observe that for any collection of sequences $\{a_n^i\}_{n\in\mathbb{N}}^{1\leq i\leq m}$, such that for each $i$, $a_{n+1}^i<a^i_n$ for all $n$, and $\lim_{n\longrightarrow\infty}a^i_n=0$, an appropriate collection of radii $\{\rho^i_n\}_{n\in\mathbb{N}}^{1\leq i\leq m}$ can be obtained by considering tails of the sequences $\{a_n^i\}_{n\in\mathbb{N}}^{1\leq i\leq m}$: for each $1\leq i\leq m$, there exists an $N(i)$ such that $\overline{B^i_{N(i)}}\cap\overline{B^j_{N(j)}}=\emptyset$, whenever $i\neq j$. Let $\rho^i_k=a^i_{N(i)+k}$.

The results of \cite{dfr} are generalized to the case of countably many balls in \cite{fs}. Here, the authors consider a sequence of centers $\{p_i\}_{i=1}
^\infty$ such that the derived set $D(\{p_i\}_{i=1}
^\infty)=\bigcap_{k=1}^\infty\overline{\bigcup_{i>k}p_i}$ is disjoint from the sequence of centers $\{p_i\}_{i=1}
^\infty$. Again, by considering appropriate tails of any collection of infinitesimal sequence $\{a_n^i\}_{n\in\mathbb{N}}^{i\in\mathbb{N}}$, one can insure that $\overline{B^i_1}\cap\overline{B^j_1}=\emptyset$, whenever $i\neq j$. Should $f:\mathcal{M}\longrightarrow\mathcal{M}$ be a transitive homeomorphism, then there exist
\begin{enumerate}
	\item a residual set $\mathcal{A}\subseteq\ell_\infty(\mathcal{M})$, and
	\item a residual set $\mathcal{B}$ in $\mathcal{M}$, each of whose elements $x$ has a dense trajectory $\mathcal{O}(x)$ in $\mathcal{M}$
\end{enumerate}
such that for each $\{p_1,p_2,\dots\}$ in $\mathcal{A}$, and each $i$ in $\mathbb{N}$, the trajectory $\mathcal{O}(x)$ will first visit the ball $B^i_n$ centered at $p_i$, for infinitely many values $n$. As in the finite case, this is regardless of the rate at which the radii $\rho^i_n$ shrink to zero.

Here, we extend the result of \cite{dfr} and \cite{fs} by no longer requiring that the sequence of centers $\{p_1,p_2,\dots\}$ is disjoint form its derived set. Three cases are considered:
\begin{enumerate}
	\item the closure of $\{p_i\}_{i=1}
	^\infty$ is countable;
	\item the closure of $\{p_i\}_{i=1}
	^\infty$ is nowhere dense, and
	\item the closure of $\{p_i\}_{i=1}
	^\infty$ contains an open ball in $\mathcal{M}$.
\end{enumerate}
We show that the results of \cite{dfr} and \cite{fs} generalize to cases 1 and 2, for appropriate tails of any collection of infinitesimal sequences $\{a^i_n\}_{n\in\mathbb{N}}^{i\in\mathbb{N}}$. In the case that the closure of $\{p_i\}_{i=1}
^\infty$ is somewhere dense, there is no possible result analogous to those of \cite{dfr} and \cite{fs}, for any collection of infinitesimal sequences.

We proceed through several sections. In Section \ref{preliminaries}, we present the notation, definitions and previously known results necessary to our analysis. The main results come in Section \ref{count-closure}, where we consider the case that the closure of centers $\{p_i\}_{i=1}
^\infty$ is countable. It is then easy, with Section \ref{nwd-closure}, to get the desired results in the case that the closure of the $\{p_i\}_{i=1}
^\infty$ is nowhere dense. The brief and final Section \ref{swd_closure} addresses the case that the closure of $\{p_i\}_{i=1}
^\infty$ is somewhere dense.

\section{Preliminaries}\label{preliminaries}
\subsection{Notation and definitions}
Throughout, let $(\mathcal{M},d)$ be a compact metric space with no isolated points, while $f:\mathcal{M}\longrightarrow\mathcal{M}$ is a homeomorphism. Let $\mathbb{N}$ denote the set of positive integers and $\mathbb{N}_0=\mathbb{N}\cup\{0\}$. Take $\mathcal{O}^-(x)$ and $\mathcal{O}^+(x)$ to be, respectively, the backward and forward $f-$orbit of the point $x$; that is, $\mathcal{O}^-(x)=\{f^k(x):k\in\mathbb{Z}\setminus\mathbb{N}_0\}$ and $\mathcal{O}^+(x)=\{f^k(x):k\in\mathbb{N}_0\}$.

Given a point $p\in\mathcal{M}$ and a closed subset $F\subseteq\mathcal{M}$, let
$$d(p,F)=\min\{d(p,q):q\in F\}\text{.}$$
Thus, if $p\notin F$, then $d(p,D)>0$. Given another closed subset $G\subseteq \mathcal{M}$, let $d_H(F,G)$ be the Hausdorff distance between $F$ and $G$, so that
$$d_H(F,G)=\inf\{\epsilon>0:F\subseteq B_\epsilon(G)\text{ and }G\subseteq B_\epsilon(F)\}\text{.}$$

In the following we will consider $\{p_i\}_{i=1}^\infty$, a sequence of pairwise distinct points of $\mathcal{M}$.
For every $i\in\mathbb{N}$, let $\{\rho^i_n\}_{n=1}^\infty$ be a decreasing sequence such that$$\lim_{n\longrightarrow\infty}\rho^i_n=0\text{.}$$

The open ball with center $p_i$ and radius $\rho^i_n$ will be denoted by $B^i_n$; that is, $B^i_n=B_{\rho^i_n}(p_i)$.

Re-introducing the notation used in \cite{dfr}, by \emph{open dynamical system} we mean a 4-tuple $(\mathcal{M},d,f,\mathfrak{S})$, where $\mathfrak{S}:=\{p_i, \{\rho^i_n\},  i\in \mathbb{N}, n\in\mathbb{N}\}$. In the sequel we study the predictability of open dynamical systems.

Let us recall the definition of several subsets used in \cite{fs}.
\begin{defi}\label{defT11}
	
	In the following, we adopt the convention $\min \emptyset = +\infty$.
	
	\begin{itemize}
		\item For $i\in \mathbb{N}$, a point $x\in \mathcal{M}$ belongs, respectively,  to $\mathfrak{T}^1(i)$ and to $\mathfrak{T}^1$, if there exists $c\in\mathbb{N}_0$ such that 
		\begin{equation*}\label{eq_defT1i}
			\mathcal{O}^+(x)\cap  B^i_{c}=\emptyset \quad\text{and}\quad \mathcal{O}^+(x)\cap  \bigcup_{j=1}^\infty B^j_{c}=\emptyset.
		\end{equation*}
		Thus, $x\in\mathfrak{T}^1$ if $\mathcal{O}^+(x)$ misses all of the balls $B^i_n$, whenever $n\geq c$.
			
		\item A point $x\in \mathcal{M}$ belongs to $\mathfrak{T}^2(i)$ if there is $c\in\mathbb{N}_0$ such that, $\text{for all } n\geq c$,
		\begin{equation*}
			\min\{k\in\mathbb{N}_0 : f^k(x)\in B^i_n\}<\min\{k\in\mathbb{N}_0 : f^k(x)\in B^j_n \ \text{ for all}\ j\ne i\}.
			\label{t2}
		\end{equation*}
		\noindent We set $\mathfrak{T}^2:=\bigcup_{j=1}^\infty \mathfrak{T}^2(j)$. Thus, $x\in\mathfrak{T}^2$ if $\mathcal{O}^+(x)$ always first visits $B^i_n$, for some fixed $i$, whenever $n\geq c$. 
	\end{itemize}
	
	The points in $\mathfrak{T}^1\cup \mathfrak{T}^2$ are called \emph{decisive points}.
	\begin{itemize}
		
		\item A point $x\in \mathcal{M}$ belongs to $\mathfrak{T}(i)$ if 
		the following inequality holds for infinitely many natural numbers $n$:
		$$\min\{k\in\mathbb{N}_0:f^k(x)\in B^{i}_n\}<\min\{k\in\mathbb{N}_0:f^k(x)\in B^j_n\  \text{ for all } j\ne i\}.$$ 
		
		\item  A point $x\in \mathcal{M}$ belongs to $\mathfrak{T}^3$ if and only if there are $i_1 \ne i_2$ in $\mathbb{N}$ such that $x\in\mathfrak{T}(i_1)\cap\mathfrak{T}(i_2)$.  
	\end{itemize}
	
	The points in $\mathfrak{T}^3$ are called \emph{indecisive points}.
	
	\begin{itemize}
		\item We say that $x$ belongs to $\mathfrak{T}$ if
		\begin{equation*}\label{def_compl_indec}
			x\in \bigcap_{j=1}^\infty \mathfrak{T}(j).
		\end{equation*}
	\end{itemize}
	
	We will call the points in $\mathfrak{T}$ \emph{completely indecisive}.
	
\end{defi}

\begin{defi}
	We say that the open dynamical system $(\mathcal{M},d,f,\mathfrak{S}:=\{p_i, \{\rho^i_n\},  i\in \mathbb{N}, n\in\mathbb{N}\})$ is \emph{completely indecisive} if the set $\mathfrak{T}\ \text{\emph{is residual.}}$
\end{defi}

We find that
\begin{enumerate}
	\item there is a dense set $\mathcal{S}\subseteq\ell_\infty(\mathcal{M})$, each of whose elements has a countable closure, and for which we can find a sequence of radii such that our system is completely indecisive;
	
	\item there is a residual set $\mathcal{T}\subseteq\ell_\infty(\mathcal{M})$, each of whose elements has a nowhere dense closure, and for which we can find a sequence of radii such that our system is completely indecisive, and
	
	\item for any sequence $\{p_i\}_{i=1}^\infty$ such that the closure Cl$(\{p_i\}_{i=1}^\infty)$ is somewhere dense, there is not a sequence of radii such that our system is completely indecisive.
\end{enumerate}

Given that we will be working within complete metric spaces, we will be able to make good use of the Baire category theorem.

\begin{defi}
	\noindent A subset $S$ of a topological space $X$ is said to be of \emph{ the first category} if there exists a countable family $\{S_i\}_{i\in\mathbb{N}}$ of nowhere dense subsets of $X$ such that $S=\cup_{i\in\mathbb{N}} S_i$. We say that a set $S\subseteq X$ is \emph{residual} if $X\setminus S$ is of the first category. An element of a residual subset of $X$ is called either a \emph{typical} or a \emph{generic} element of $X$.
\end{defi}

\begin{B}\label{th_baire}
	If $X$ is a complete metric space, and $S$ is a first category subset of $X$, then $X\setminus S$ is dense.
\end{B}

\section{Cl$(\{p_i\}_{i=1}^\infty)$ is countable}\label{count-closure}
In this section we will consider sequences $\{p_i\}_{i=1}^\infty$ such that $D(\{p_i\}_{i=1}^\infty)$ --- the derived set of $\{p_i\}_{i=1}^\infty$ --- is countable.
Given a set $A\subseteq\mathcal{M}$, let $D(A)$ denote its derived set. Thus, for any ordinal $\alpha$, we can define, as done in \cite{kuratowski2014topology}, the $\alpha$-th derived set $D^\alpha(A)$ in the following way:
\begin{itemize}
	\item $D^0(A)=A$;
	\item $D^\alpha (A)=D(D^{\alpha-1}(A))$, if $\alpha$ is not a limit ordinal, and
	\item  $D^\alpha (A)=\bigcap_{\lambda<\alpha}D^\lambda (A)$, if $\alpha$ is a limit ordinal. 
\end{itemize}
In other words, we will consider the sequences $\{p_i\}_{i=1}^\infty$ such that there exists an ordinal $\beta<\Omega$ --- where $\Omega$ is taken to be the first uncountable ordinal --- such that $D^{\beta+1}(\{p_i\}_{i=1}^\infty)=\emptyset$.

\subsection{Preliminary results} \label{prevresults}

Let $\mathcal{S}$ be the set of closed sequences; that is, $\mathcal{S}=\{\{p_i\}_{i=1}^\infty:\text{Cl}(\{p_i\}_{i=1}^\infty)=\{p_i\}_{i=1}^\infty\}$ in $\ell_\infty(\mathcal{M})$. Here, we take the set $\ell_\infty(\mathcal{M})$ to be the collection of the sequences in $\mathcal{M}$ with the supremum metric given by $\rho(\{p_i\}_{i=1}^\infty,\{q_i\}_{i=1}^\infty)=\sup_id(p_i,q_i)$.
One might suspect that $\mathcal{S}$ is a closed set, but, as the following example shows, the set $\mathcal{S}$ is not closed in $\ell_\infty(\mathcal{M})$.
\begin{ex}\label{example}
	Let $\mathcal{M}=[0,1]$. Take two sequences $\{a_n\}_{n=1}^\infty,\{b^i\}_{i=1}^\infty\subseteq(0,1]$ such that $a_n\xrightarrow{n\longrightarrow\infty}0$ and $b^i\xrightarrow{i\longrightarrow\infty}0$. Let us define$$p^i_n=
	\begin{cases}
		a_n+b^i & n\neq i\\
		b^i & n=i
	\end{cases}$$
	It follows that, for every $i\in\mathbb{N}$, $p^i_n\xrightarrow{n\longrightarrow\infty}b^i\in\{p^i_n\}_{n=1}^\infty$. Thus, for every $i\in\mathbb{N}$, the sequence  $\{p^i_n\}_{n=1}^\infty$is an element of $\mathcal{S}$. Since,$$\rho(\{p^i_n\}_{n=1}^\infty,\{a_n\}_{n=1}^\infty)=\max\{b^i,\lvert b^i-a^i\rvert\}\xrightarrow{i\longrightarrow\infty}0\text{,}$$we have $\{p^i_n\}_{n=1}^\infty\xrightarrow{i\longrightarrow\infty}\{a_n\}_{n=1}^\infty$. But, since $a_n\xrightarrow{n\longrightarrow\infty}0$ and $a_n>0$ for every $n\in\mathbb{N}$, we also have $\{a_n\}_{n=1}^\infty\notin\mathcal{S}$. Therefore $\mathcal{S}$ is not closed.
\end{ex} 

We now make several observation concerning elements of $\mathcal{S}$. In particular, we develop two classes of elements in $\mathcal{S}\subseteq\ell_\infty(\mathcal{M})$ that are both dense, and fundamental to our analysis of those sequences with a countable closure.

Let $\mathcal{I}=\{x\in\mathcal{M}:\overline{\mathcal{O}^-(x)}=\mathcal{M}\}$; as shown in \cite{kurka2003topological}, $\mathcal{I}$ is residual. Thus, given an arbitrary countable collection of point $\{y_i\}_{i=1}^\infty\subseteq\mathcal{M}$, the set $\mathcal{I}\setminus\bigcup_{i=1}^\infty\mathcal{O}(y_i)$ is dense, because the set $\mathcal{O}(y_i)$ is of first category for every $i\in\mathbb{N}$.
	
Let $\mathcal{S}^1$ be the set of sequences $\{p_i\}_{i=1}^\infty$ such that
\begin{enumerate}
	\item $\{p_i\}_{i=1}^\infty$ is closed;
	\item $D(\{p_i\}_{i=1}^\infty)$ is finite;
	\item $\overline{\mathcal{O}^-(p_i)}=\mathcal{M}$, for all $i\in\mathbb{N}$, and
	\item $\mathcal{O}(p_i)\cap\mathcal{O}(p_j)=\emptyset$, for every $i,j\in\mathbb{N}$, whenever $i\neq j$.
\end{enumerate}
As one sees from \cite{fs}, the set of those $\{p_i\}_{i=1}^\infty$ for which properties 3 and 4 hold, comprise a residual subset of $\ell_\infty(\mathcal{M})$.

\begin{lem} \label{dense-set-1}
	$\mathcal{S}^1$ is dense in $\ell_\infty(\mathcal{M})$.
\end{lem}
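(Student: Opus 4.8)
The plan is to approximate an arbitrary sequence $\{q_i\}_{i=1}^\infty\in\ell_\infty(\mathcal{M})$ to within any prescribed $\epsilon>0$ by a member of $\mathcal{S}^1$. The guiding idea is that although $\{q_i\}_{i=1}^\infty$ may itself accumulate at uncountably many points, we have $\epsilon$ of room at each coordinate, and this room is enough to ``snap'' every term onto one of finitely many cluster centers. Concretely, I would first invoke the total boundedness of $\mathcal{M}$ to fix a finite set $\{c_1,\dots,c_k\}$ that is a $\delta$-net for $\mathcal{M}$ with $\delta<\epsilon/2$; this finite set will end up being the derived set of the sequence I construct, so that property 2 holds automatically. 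Since $\mathcal{I}$ is dense (indeed residual), I can take the centers $c_j$ to lie in $\mathcal{I}$, which secures property 3 for them.

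Next I would assign to each index $i$ a center $c_{\sigma(i)}$ with $d(q_i,c_{\sigma(i)})<\delta$, grouping the indices into the fibers $I_j=\sigma^{-1}(j)$. The term $p_i$ will be chosen in a small ball around $c_{\sigma(i)}$: enumerating each fiber $I_j$ as $i_1<i_2<\cdots$, I would place $p_{i_m}$ within distance $\min\{1/m,\ \epsilon-\delta\}$ of $c_j$. This guarantees both that $d(p_i,q_i)<\epsilon$ for every $i$ and that the points assigned to $c_j$ converge to $c_j$; hence the range accumulates only at those centers having an infinite fiber, which yields a finite derived set (property 2) and, once those centers are themselves included as terms, a closed range (property 1). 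To realize a limiting center $c_j$ as an actual term, I would reserve one index of its infinite fiber $I_j$ and set the corresponding $p_i=c_j$, which is legitimate because $d(c_j,q_i)<\delta<\epsilon$ there, while centers with finite or empty fiber are not limit points and need not be included.

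The remaining properties 3 and 4 I would obtain by making all of these countably many choices generically. Listing the points to be selected --- first the centers, then the cluster points --- as a single sequence, I would choose each new point inside its prescribed ball but also inside $\mathcal{I}\setminus\bigcup_y\mathcal{O}(y)$, the union running over the previously chosen points $y$ (finitely many at each stage). By the observation recorded just before the statement, this set is dense, hence it meets every open ball, so each choice is possible; every $p_i$ then lands in $\mathcal{I}$, giving property 3, and, because the full $f$-orbits partition $\mathcal{M}$, forbidding membership in the earlier orbits forces $\mathcal{O}(p_i)\cap\mathcal{O}(p_{i'})=\emptyset$ whenever $i\ne i'$, giving property 4. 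Keeping the points pairwise distinct costs nothing, since only countably many values are excluded at each step.

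The one genuinely delicate point is reconciling property 2 with an arbitrary, possibly dense, $\{q_i\}_{i=1}^\infty$: a naive attempt to let $p_i$ faithfully track $q_i$ would force the range to accumulate wherever $\{q_i\}_{i=1}^\infty$ does and thereby destroy finiteness of the derived set. The resolution, and the heart of the argument, is precisely the net-snapping above --- the $\epsilon$ of freedom lets every term collapse onto the fixed finite net $\{c_1,\dots,c_k\}$, so that only finitely many accumulation points survive. What remains is bookkeeping: checking that the shrinking radii $1/m$ genuinely confine the accumulation set to the net, and that the inductive generic selection can be carried out inside ever-smaller balls about each center without exhausting the dense set $\mathcal{I}\setminus\bigcup_y\mathcal{O}(y)$.
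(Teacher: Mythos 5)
Your proposal is correct and follows essentially the same route as the paper's (much terser) proof: cover $\mathcal{M}$ by a finite $\epsilon/2$-net, snap each term of the given sequence onto a cluster near its assigned net point so that each cluster accumulates only at that point, and secure properties 3 and 4 by choosing every point generically in $\mathcal{I}$ minus the orbits of the previously chosen points. Your version simply fills in the details (the fiber decomposition, the shrinking radii $1/m$, the inclusion of the limit centers as terms) that the paper leaves implicit.
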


\begin{proof}
	Let $\{p_i\}_{i=1}^\infty\in\ell_\infty (M)$, and take $\epsilon>0$.
	
	Since $\mathcal{M}$ is compact, there exist $N$ points $x_1,\dots,x_N\in\mathcal{M}$ such that $\mathcal{M}=\bigcup_{i=1}^NB_{\epsilon/2}(x_i)$.
	
	One sees that, for any of these balls $B_{\epsilon/2}(x_i)$, we can perturb the subset $\{p_i\}_{i=1}^\infty\cap B_{\epsilon/2}(x_i)$ while staying within $B_{\epsilon/2}(x_i)$, in order to get a sequence $\{q_i\}_{i=1}^\infty\in\mathcal{S}^1$. To be precise, if $\{p_i\}_{i=1}^\infty\cap B_{\epsilon/2}(x_j)$ is infinite, than we can construct a subsequence of $\{q_i\}_{i=1}^\infty$ with only one accumulation point.
\end{proof}

\begin{lem} \label{recur1}
	For every $x\in\mathcal{I}$ and for every $\delta>0$ there exist a closed sequence $\{x_n\}_{n=1}^\infty\in\ell_\infty (M)$ contained in $B_\delta(x)$ and a sequence $\{r_n\}_{n=2}^\infty\subseteq\mathbb{R}_{>0}$ such that
	\begin{itemize}
		\item $x\in\{x_n\}_{n=1}^\infty$;
		\item $\overline{\mathcal{O}^-(x_i)}=\mathcal{M}$, for every $i \in \mathbb{N}$;
		\item $\mathcal{O}(x_i)\cap \mathcal{O}(x_j)=\emptyset$, whenever $i\neq j$, and
		\item $\overline{B_{r_n}(x_n)}\cap\overline{ B_{r_m}(x_m)}=\emptyset$, whenever $n\neq m$.
	\end{itemize}
\end{lem}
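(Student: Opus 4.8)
The plan is to construct the sequence by induction, putting $x_1=x$ and then inserting points $x_2,x_3,\dots$ that crowd in toward $x_1$, so that $x_1$ is the unique accumulation point and $\{x_n\}_{n=1}^\infty$ is automatically closed. Since $x=x_1\in\mathcal I$, the second bullet already holds at the first stage; the work is to choose the remaining $x_n$ inside ever-smaller balls about $x_1$ while honoring all four conditions at once.

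The engine is the observation preceding the lemma: for any finite collection of points the set $\mathcal I\setminus\bigcup_j\mathcal O(x_j)$ is dense, since $\mathcal I$ is residual and each full orbit $\mathcal O(x_j)$ is first category in the perfect space $\mathcal M$. Suppose at stage $n$ I have produced $x_1,\dots,x_{n-1}$ and radii $r_2,\dots,r_{n-1}$ with the closed balls $\overline{B_{r_j}(x_j)}$ pairwise disjoint and $x_1\notin\overline{B_{r_j}(x_j)}$ for every $j$. The finite union $K_{n-1}=\bigcup_{j=2}^{n-1}\overline{B_{r_j}(x_j)}$ is closed and misses $x_1$, so $B_\eta(x_1)\cap K_{n-1}=\emptyset$ for some $\eta>0$. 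Intersecting the dense set $\mathcal I\setminus\bigcup_{j=1}^{n-1}\mathcal O(x_j)$ with the nonempty open ball $B_{\min\{1/n,\delta,\eta\}}(x_1)$ produces a point $x_n$ that lies in $\mathcal I$, avoids every previous orbit (hence has full orbit disjoint from each $\mathcal O(x_j)$, as distinct full orbits of a homeomorphism are disjoint), satisfies $d(x_n,x_1)<\min\{1/n,\delta\}$, and lies outside $K_{n-1}$; since $x_n\notin\mathcal O(x_1)$ we also get $x_n\neq x_1$.

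With $x_n$ fixed I would choose $r_n>0$ so small that $r_n<d(x_n,x_1)/2$ and $\overline{B_{r_n}(x_n)}\cap K_{n-1}=\emptyset$, both possible because $x_n\neq x_1$ and $x_n$ has positive distance to the closed set $K_{n-1}$. The first inequality keeps $x_1$ out of $\overline{B_{r_n}(x_n)}$, preserving the inductive hypothesis, and the second extends pairwise disjointness of the balls to stage $n$. Iterating yields $\{x_n\}$ and $\{r_n\}_{n=2}^\infty$ meeting the last three bullets by construction. For the closedness requirement, the disjoint balls force the $x_n$ to be distinct while $d(x_n,x_1)<1/n$ forces $x_n\to x_1$; hence every accumulation point of $\{x_n\}$ equals $x_1$, which already belongs to the set, so $\text{Cl}(\{x_n\})=\{x_n\}$. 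Finally each $x_n\in B_\delta(x)$, so the whole sequence lies in $B_\delta(x)$.

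The main obstacle is the tension between the third bullet (the points must accumulate at $x_1$, hence approach it arbitrarily closely) and the fourth (their balls must remain pairwise disjoint and, implicitly, must not swallow $x_1$). This is resolved by the two-sided control at each step: selecting $x_n$ in the open complement of the previously built closed set $K_{n-1}$ around $x_1$, and then shrinking $r_n$ below both $d(x_n,x_1)/2$ and the distance from $x_n$ to $K_{n-1}$. The remaining properties --- dense backward orbits and orbit-disjointness --- follow routinely from the density of $\mathcal I\setminus\bigcup_j\mathcal O(x_j)$.
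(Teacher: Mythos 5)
Your proof is correct and follows essentially the same route as the paper: an inductive selection of $x_n$ from the dense set $\mathcal{I}\setminus\bigcup_{j<n}\mathcal{O}(x_j)$ inside shrinking balls about $x_1$, with $r_n$ chosen small enough to keep the closed balls pairwise disjoint and away from $x_1$. The only (cosmetic) difference is that the paper forces disjointness by placing $x_n$ and $B_{r_n}(x_n)$ in the dyadic annulus $B_{\delta/2^{n-1}}(x_1)\setminus\overline{B_{\delta/2^{n}}(x_1)}$, whereas you control $r_n$ by the distance to the union of the previously constructed closed balls.
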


\begin{proof}
	Set $x_1=x$. For $n>2$, let us define $x_n$ and $r_n$ inductively. Since the set $\mathcal{I}\setminus(\bigcup_{i<n}\mathcal{O}(x_i))$ is dense, there exists$$x_n\in \bigg(B_{\delta/2^{n-1}}(x_1)\setminus \overline{B_{\delta/2^n}(x_1})\bigg)\cap\bigg(\mathcal{I}\setminus(\bigcup_{i<n}\mathcal{O}(x_i))\bigg).$$Set $r_n<\min\{\frac{\delta}{2^{n-1}}-d(x_n,x_1),d(x_n,x_1)-\frac{\delta}{2^n}\}$. Therefore, for any $n>2$, we have $B_{r_n}(x_n)\subseteq B_{\delta/2^{n-1}}(x_1)\setminus B_{\delta/2^n}(x_1)$.
\end{proof}

\begin{lem} \label{recur2}
	For every $x\in\mathcal{I}$, for every $\delta>0$ and for every $\alpha<\Omega$, there exists a closed sequence $\{x_n\}_{n=1}^\infty\in \ell_\infty (M)$ contained in $B_\delta(x)$ such that
	\begin{enumerate}
		\item $x\in\{x_n\}_{n=1}^\infty$;
		\item $\overline{\mathcal{O}^-(x_i)}=\mathcal{M}$, for every $i \in \mathbb{N}$;
		\item $\mathcal{O}(x_i)\cap \mathcal{O}(x_j)=\emptyset$, whenever $i\neq j$, and
		\item $D^\alpha(\{x_n\}_{n=1}^\infty)=\{x\}$.
	\end{enumerate}
\end{lem}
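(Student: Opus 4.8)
The plan is to argue by transfinite induction on $\alpha<\Omega$. The base case $\alpha=1$ is precisely Lemma \ref{recur1}: the closed sequence it produces accumulates only at $x$, so its first derived set is $\{x\}$. (The degenerate case $\alpha=0$ is vacuously excluded, since $D^0(\{x_n\})=\{x_n\}$ cannot reduce to a single point while the orbits $\mathcal{O}(x_n)$ stay pairwise disjoint.) For the inductive step I would always begin from the scaffold furnished by Lemma \ref{recur1} applied to $x$ and $\delta$: a closed sequence $\{y_k\}_{k=1}^\infty\subseteq B_\delta(x)$ with $y_1=x$, each $\overline{\mathcal{O}^-(y_k)}=\mathcal{M}$, pairwise disjoint orbits, pairwise disjoint closed balls $\overline{B_{r_k}(y_k)}$ for $k\ge 2$, with $y_k\to x$ and $x\notin\bigcup_{k\ge2}\overline{B_{r_k}(y_k)}$. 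The idea is to graft, inside each ball $B_{r_k}(y_k)$, a lower-rank copy supplied by the inductive hypothesis and centered at $y_k$, and then to take the union together with the single point $x$.

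In the successor step $\alpha=\gamma+1$, for each $k\ge2$ I apply the inductive hypothesis to $y_k\in\mathcal{I}$, with radius $r_k$ in place of $\delta$ and ordinal $\gamma$, obtaining a closed sequence $A_k\subseteq B_{r_k}(y_k)$ with $y_k\in A_k$, dense backward orbits, disjoint orbits, and $D^\gamma(A_k)=\{y_k\}$. I then set $\{x_n\}:=\{x\}\cup\bigcup_{k\ge2}A_k$. Because the balls $\overline{B_{r_k}(y_k)}$ are pairwise disjoint and accumulate only at $x$, the derived-set operator is local away from $x$: for any $p\ne x$ a small neighborhood of $p$ meets at most one ball, so $D^\lambda(\{x_n\})\setminus\{x\}=\bigcup_{k\ge2}D^\lambda(A_k)$. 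Since $y_k\in D^\lambda(A_k)$ for all $\lambda\le\gamma$ and $y_k\to x$, the point $x$ survives every derivation up to level $\gamma$, whence $D^\gamma(\{x_n\})=\{y_k:k\ge2\}\cup\{x\}$, and one more derivation gives $D^{\gamma+1}(\{x_n\})=\{x\}$.

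In the limit step I fix a strictly increasing sequence of ordinals $\gamma_k\nearrow\alpha$ with $\gamma_k<\alpha$, and for each $k\ge2$ apply the inductive hypothesis to $y_k$ with radius $r_k$ and ordinal $\gamma_k$, obtaining $A_k$ with $D^{\gamma_k}(A_k)=\{y_k\}$, hence $D^\lambda(A_k)=\emptyset$ for $\lambda>\gamma_k$. Setting $\{x_n\}:=\{x\}\cup\bigcup_{k\ge2}A_k$ again, locality gives $D^\alpha(\{x_n\})\setminus\{x\}=\bigcup_k D^\alpha(A_k)=\emptyset$, since $\alpha>\gamma_k$ for every $k$. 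It then remains to verify $x\in D^\alpha(\{x_n\})$: for each $\lambda<\alpha$ one has $\gamma_k>\lambda$ for all large $k$, so $y_k\in D^\lambda(A_k)$ for cofinitely many $k$, and these accumulate at $x$, giving $x\in D^{\lambda+1}(\{x_n\})$; the limit clause in the definition of $D^\alpha$ then yields $x\in D^\alpha(\{x_n\})$. Hence $D^\alpha(\{x_n\})=\{x\}$.

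The main obstacle is preserving the global orbit-disjointness of condition 3 across all the grafted families and the point $x$, since the inductive hypothesis only guarantees disjointness internal to each $A_k$. I would therefore carry the whole construction out as a single transfinite recursion in which every newly chosen point is drawn from the dense set $\mathcal{I}\setminus\bigcup_i\mathcal{O}(y_i)$ obtained by deleting the countably many orbits already committed --- exactly the density device used in the proof of Lemma \ref{recur1}; equivalently, one proves the slightly stronger statement in which a prescribed countable family of orbits is avoided. Conditions 1 and 2 are then immediate ($x\in\{x_n\}$ and every chosen point lies in $\mathcal{I}$), while closedness and containment in $B_\delta(x)$ follow because the balls lie in $B_\delta(x)$ and accumulate only at $x\in\{x_n\}$. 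I expect the delicate points to be this disjointness bookkeeping and, in the limit step, the persistence of $x$ through all $\alpha$ derivations, which is precisely what the choice $\gamma_k\nearrow\alpha$ secures.
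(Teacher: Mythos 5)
Your proposal is correct and follows essentially the same route as the paper: both use Lemma \ref{recur1} to produce the scaffold $\{q_n\}$ with pairwise disjoint balls $B_{r_n}(q_n)$ accumulating only at $x$, and then recursively graft lower-rank copies (with ordinal $\gamma$ at a successor $\gamma+1$, and ordinals $\gamma_n\nearrow\lambda$ at a limit) inside those balls. If anything, you are more careful than the paper on two points it leaves implicit --- the locality argument computing $D^\lambda$ of the grafted union, and the need to strengthen the inductive statement so that orbit-disjointness holds \emph{across} the different grafted families rather than only within each one.
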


\begin{proof}
	Our goal is to define a closed countable set $A$ such that
	\begin{itemize}
		\item $x\in A$;
		\item $\overline{\mathcal{O}^-(q)}=\mathcal{M}$, for every $q \in A$;
		\item $\mathcal{O}(q)\cap \mathcal{O}(p)=\emptyset$, for every $p,q\in A$, whenever $p\neq q$, and
		\item $D^\alpha(A)=\{x\}$.
	\end{itemize}
	Let $A_0=\{x\}$. Using Lemma \ref{recur1}, we can construct a sequence $\{q_n\} _{n=1}^\infty$  and a sequence $\{r_n\}_{n=2}^\infty\subseteq\mathbb{R}_{>0}$ such that
	
	\begin{itemize}
		\item $x\in\{q_n\}_{n=1}^\infty$;
		\item $\overline{\mathcal{O}^-(q_i)}=\mathcal{M}$, for every $i \in \mathbb{N}$;
		\item $\mathcal{O}(q_i)\cap \mathcal{O}(q_j)=\emptyset$, whenever $i\neq j$, and
		\item $\overline{B_{r_n}(q_n)}\cap\overline{ B_{r_m}(q_m)}=\emptyset$, whenever $n\neq m$.
	\end{itemize}
	
	Now, for any ordinal $1<\lambda<\Omega$ let us define $A_\lambda$ recursively. 
	\begin{itemize}
		\item If $\lambda=\gamma+1$, then $A_\lambda =A_1\cup(\bigcup_{n=2}^\infty A_\gamma^n)$, where $A^n_\gamma$ is a closed sequence contained in $B_{r_n}(q_n)$ that satisfies the conditions \textit{1}, \textit{2} and \textit{3}, and such that $D^\gamma(A^n_\gamma)=\{q_n\}$. This follows from Lemma \ref{recur1}.
		\item If $\lambda$ is a limit ordinal, let $\{\gamma_n\}_{n=1}^\infty$ an increasing sequence of ordinals such that $\gamma_n<\lambda$ and $\sup\{\gamma_n\}_{n=1}^\infty= \lambda$. Then $A_\lambda =A_1\cup(\bigcup_{n=1}^\infty A_{\gamma_n}^n)$, where $A^n_{\gamma_n}$ is a closed sequence contained in $B_{r_n}(q_n)$ that satisfies the conditions 1, 2 and 3, and $D^{\gamma_n}(A^n_{\gamma_n})=\{q_n\}$.
	\end{itemize}
	Finally, set $A=A_\alpha$. Since $A$ is countable, we can enumerate this set as $\{x_n\}_{n=1}^\infty$.
\end{proof}
For every $K\in\mathbb{N}$ and for every ordinal $1<\beta<\Omega$, let $\mathcal{S}^\beta_K$ be the set of sequences $\{p_i\}_{i=1}^\infty$ such that
\begin{itemize}
	\item $\{p_i\}_{i=1}^\infty$ is closed;
	\item $\overline{\mathcal{O}^-(p_i)}=\mathcal{M}$, for all $i\in\mathbb{N}$;
	\item $\mathcal{O}(p_i)\cap\mathcal{O}(p_j)=\emptyset$, for every $i,j\in\mathbb{N}$ whenever $i\neq j$, and
	\item $\lvert D^\beta\{p_i\}_{i=1}^\infty\rvert=K$.
\end{itemize}
\begin{cor} \label{dense-set-2}
	For every $K\in\mathbb{N}$ and for every ordinal $1<\beta<\Omega$, $\mathcal{S}^\beta_K$ is dense in $\ell_\infty(\mathcal{M})$.
\end{cor}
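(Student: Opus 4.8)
The plan is to imitate the proof of Lemma \ref{dense-set-1}, but instead of collapsing the points inside each covering ball to a single accumulation point, I would install $K$ disjoint ``deep'' clusters, each produced by Lemma \ref{recur2}, inside one of the balls, while keeping every other ball ``shallow''. Fix $\{p_i\}_{i=1}^\infty\in\ell_\infty(\mathcal{M})$ and $\epsilon>0$, and note that $1<\beta$ forces $\beta\geq 2$. First I would use compactness to write $\mathcal{M}=\bigcup_{l=1}^N B_{\epsilon/2}(z_l)$ and partition $\mathbb{N}=\bigsqcup_{l=1}^N N_l$ so that $p_i\in B_{\epsilon/2}(z_l)$ whenever $i\in N_l$. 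By the pigeonhole principle some $N_l$, say $N_1$, is infinite. The point of the $\epsilon/2$-cover is that \emph{any} sequence $\{q_i\}$ with $q_i$ lying in the same ball $B_{\epsilon/2}(z_l)$ as $p_i$ automatically satisfies $\rho(\{p_i\},\{q_i\})<\epsilon$; hence it suffices to redistribute the points ball by ball.

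Inside $B_{\epsilon/2}(z_1)$ I would choose $K$ points $y_1,\dots,y_K\in\mathcal{I}$ with pairwise disjoint closed balls $\overline{B_{\eta_j}(y_j)}\subseteq B_{\epsilon/2}(z_1)$; this is possible because $\mathcal{I}$ is dense and $\mathcal{M}$ has no isolated points. Partition $N_1$ into $K$ infinite pieces $N_1^1,\dots,N_1^K$ and apply Lemma \ref{recur2} with $x=y_j$, $\delta=\eta_j$, $\alpha=\beta$ to obtain a closed sequence $C_j\subseteq B_{\eta_j}(y_j)$ whose points have dense backward orbits, whose orbits are pairwise disjoint, and with $D^\beta(C_j)=\{y_j\}$; then index $C_j$ by $N_1^j$. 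For every $l\neq 1$ with $N_l$ infinite, I would instead use Lemma \ref{recur1} to collapse $\{p_i:i\in N_l\}$ to a single convergent cluster of $\mathcal{I}$-points inside $B_{\epsilon/2}(z_l)$ (one accumulation point, so derived set a singleton), and for finite $N_l$ simply replace each $p_i$ by a nearby point of $\mathcal{I}\cap B_{\epsilon/2}(z_l)$. Crucially, all these choices are to be made \emph{sequentially}, each new point drawn from $\mathcal{I}$ minus the countably many orbits already committed; since deleting countably many orbits from the residual set $\mathcal{I}$ leaves a dense set, and the constructions in Lemmas \ref{recur1}--\ref{recur2} only ever need density of such a set, this guarantees globally that the resulting orbits are pairwise disjoint and backward-dense.

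It then remains to verify that the assembled $\{q_i\}$ lies in $\mathcal{S}^\beta_K$. Closeness and properties (2)--(3) hold by construction; property (1), closedness, holds because $\{q_i\}$ is a finite union of the closed clusters $C_1,\dots,C_K$ together with the closed shallow and finite pieces, and a finite union of closed sets is closed. For property (4) I would invoke the identity $D(A\cup B)=D(A)\cup D(B)$, which passes to finite unions and, by transfinite induction, gives $D^\beta\bigl(\bigcup(\text{pieces})\bigr)=\bigcup D^\beta(\text{pieces})$. Each deep cluster contributes $D^\beta(C_j)=\{y_j\}$, while each shallow cluster has $D^2=\emptyset$ and each finite piece has $D^1=\emptyset$, so, as $\beta\geq 2$, they contribute nothing. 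Thus $D^\beta(\{q_i\})=\{y_1,\dots,y_K\}$ has exactly $K$ elements, giving $\{q_i\}\in\mathcal{S}^\beta_K$ within $\epsilon$ of $\{p_i\}$, which proves density.

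I expect two steps to be the real content. The first is keeping the orbits pairwise disjoint across pieces that Lemma \ref{recur2} builds independently; this is the subtle point, and it is handled by the sequential ``avoid all previously used orbits'' device, justified by density of $\mathcal{I}$ after deleting countably many first-category orbits. The second is the transfinite additivity $D^\beta(\bigcup(\text{pieces}))=\bigcup D^\beta(\text{pieces})$: the successor step is immediate, but the limit-ordinal step requires checking that an intersection of the decreasing families $\{D^\gamma(\text{piece})\}_{\gamma<\beta}$ commutes with the finite union, which is exactly what isolates the $K$ surviving points $y_1,\dots,y_K$ and suppresses the shallow and finite debris.
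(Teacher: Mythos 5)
Your argument is correct and is precisely the route the paper intends: the corollary is left without a written proof because it is meant to follow by repeating the covering-and-perturbation argument of Lemma \ref{dense-set-1}, with Lemma \ref{recur2} supplying $K$ disjoint clusters of derived rank exactly $\beta$ and the ``avoid all previously committed orbits'' device (density of $\mathcal{I}$ minus countably many first-category orbits) guaranteeing conditions (2)--(3) globally. Your explicit verification of $D^\beta(\bigcup \text{pieces})=\bigcup D^\beta(\text{pieces})$ at limit ordinals, using that the derived-set families of closed pieces are decreasing, is a detail the paper glosses over but is handled correctly.
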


As Example \ref{example} shows, the collection of closed sequences in $\ell_\infty(\mathcal{M})$ is not itself closed. Nonetheless, as one sees from Lemma \ref{dense-set-1} and Corollary \ref{dense-set-2}, closed sequences of all ranks are dense in $\ell_\infty(\mathcal{M})$.

Set $$\mathcal{S}=\mathcal{S}^1\cup\bigcup_{1<\beta<\Omega}\bigcup_{K\in\mathbb{N}}\mathcal{S}^\beta_K=\{\{p_i\}_{i=1}^\infty\in\ell_\infty(\mathcal{M}):\{p_i\}_{i=1}^\infty\text{ is closed}\}\text{.}$$ 

In the remainder of this section, we focus our attention on elements $\{p_i\}_{i=1}^\infty$ in $\mathcal{S}$. We begin with an observation concerning the sequences $\{\rho_n^i\}_{n\in\mathbb{N}} ^{i\in\mathbb{N}}$. In what follows, we take $\beta<\Omega$ to be the ordinal number such that $D^\beta(\{p_i\}_{i=1}^n)$ is finite and non-zero.

\begin{lem}\label{lem_rhos}
	Let $\{p_i\}_{i=1}^\infty$ be an element of $\mathcal{S}$. There exist radii $\{\rho_1^i\}^{i\in\mathbb{N}}$ such that	
	\begin{enumerate} 
		\item if $p_j,p_k\in D^\lambda (\{p_i\}_{i=1}^\infty)\setminus D^{\lambda+1}(\{p_i\}_{i=1}^\infty)$ for some ordinal $\lambda\leq\beta$, then $\overline{B_1^j}\cap\overline{B_1^k}=\emptyset$, and
		\item  if $p_j\in D^\lambda (\{p_i\}_{i=1}^\infty)\setminus D^{\lambda+1}(\{p_i\}_{i=1}^\infty)$ for some ordinal $\lambda<\beta$, then $\overline{B^j_1}\cap D^{\lambda+1}(\{p_i\}_{i=1}^\infty)=\emptyset$.
	\end{enumerate}
\end{lem}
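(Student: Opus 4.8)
The plan is to assign to each center $p_j$ its Cantor--Bendixson rank and then pick $\rho^j_1$ small enough to enforce both requirements at once, using the \emph{isolation radius} at that rank in place of pairwise distances. Throughout write $D^\lambda$ for $D^\lambda(\{p_i\}_{i=1}^\infty)$.

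First I would check that every $p_j$ has a well-defined rank. The derived sets $D^\lambda$ decrease with $\lambda$, and $D^{\beta+1}=\emptyset$ because the finite set $D^\beta$ has no accumulation points. Hence $\{\lambda:p_j\in D^\lambda\}$ is a bounded, downward-closed set of ordinals which, by the limit clause $D^\mu=\bigcap_{\lambda<\mu}D^\lambda$, contains the supremum of any of its elements; it therefore has a maximum $\lambda(j)\le\beta$, and maximality gives $p_j\in D^{\lambda(j)}\setminus D^{\lambda(j)+1}$. In particular the sets $D^\lambda\setminus D^{\lambda+1}$, $0\le\lambda\le\beta$, partition $\{p_i\}_{i=1}^\infty$.

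The key observation comes next. Since $D^{\lambda(j)+1}=D(D^{\lambda(j)})$ is exactly the accumulation set of $D^{\lambda(j)}$, the relation $p_j\in D^{\lambda(j)}\setminus D^{\lambda(j)+1}$ says that $p_j$ is isolated in $D^{\lambda(j)}$; choose $\epsilon_j>0$ with $B_{\epsilon_j}(p_j)\cap D^{\lambda(j)}=\{p_j\}$. Any distinct center $p_k$ of the same rank lies in $D^{\lambda(j)}\setminus\{p_j\}$, so $d(p_j,p_k)\ge\epsilon_j$, and symmetrically $d(p_j,p_k)\ge\epsilon_k$; thus $d(p_j,p_k)\ge\max\{\epsilon_j,\epsilon_k\}\ge\tfrac12(\epsilon_j+\epsilon_k)$. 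When $\lambda(j)<\beta$ the closed set $D^{\lambda(j)+1}$ omits $p_j$, so $d(p_j,D^{\lambda(j)+1})>0$. I would then let $\rho^j_1$ be any positive number strictly below $\min\{\epsilon_j/2,\,d(p_j,D^{\lambda(j)+1})\}$, discarding the second term when $\lambda(j)=\beta$ (where $D^{\beta+1}=\emptyset$). The verification is immediate: for same-rank $p_j,p_k$ a common point $q\in\overline{B^j_1}\cap\overline{B^k_1}$ would force $d(p_j,p_k)\le\rho^j_1+\rho^k_1<\tfrac12(\epsilon_j+\epsilon_k)\le d(p_j,p_k)$, giving condition 1; and every $q\in\overline{B^j_1}$ satisfies $d(p_j,q)\le\rho^j_1<d(p_j,D^{\lambda(j)+1})$, giving $\overline{B^j_1}\cap D^{\lambda(j)+1}=\emptyset$, which is condition 2.

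The one genuine subtlety---the step I expect to be the main obstacle---is making condition 1 hold under a \emph{single} constraint per index. Imposing $\rho^j_1<\tfrac12 d(p_j,p_k)$ separately for each same-rank $k$ could force $\rho^j_1=0$ when the same-rank centers accumulate at $p_j$; this collapse is avoided precisely because $p_j\notin D^{\lambda(j)+1}$ supplies a \emph{uniform} isolation radius $\epsilon_j$ below which a single bound suffices. Confirming the existence of $\lambda(j)$ at limit stages and the isolation claim are the standard Cantor--Bendixson facts on which the whole argument rests.
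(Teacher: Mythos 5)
Your proof is correct. Every center $p_j$ does acquire a well-defined Cantor--Bendixson rank $\lambda(j)\le\beta$ by the argument you give (downward-closedness plus the intersection clause at limit ordinals), the isolation radius $\epsilon_j$ exists precisely because $p_j\notin D^{\lambda(j)+1}=D(D^{\lambda(j)})$, and the two inequalities $\rho^j_1+\rho^k_1<\tfrac12(\epsilon_j+\epsilon_k)\le\max\{\epsilon_j,\epsilon_k\}\le d(p_j,p_k)$ and $\rho^j_1<d(p_j,D^{\lambda(j)+1})$ deliver conditions 1 and 2 respectively.

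Your route differs from the paper's in how condition 1 is enforced. The paper enumerates each level $D^\lambda\setminus D^{\lambda+1}$ as $\{p_{i_k}\}_{k=1}^\infty$ and chooses the radii greedily: $\rho^{i_k}_1$ is taken smaller than both the distance from $p_{i_k}$ to the closed set $\{p_{i_h}\}_{h=k+1}^\infty\cup D^{\lambda+1}$ (closedness holding because any accumulation point of the level lies in $D^{\lambda+1}$) and the distance to the union of the $k-1$ closed balls already constructed. This makes each new ball explicitly avoid its predecessors, so the choice of $\rho^{i_k}_1$ depends on the earlier choices and on the enumeration. You instead make a single local, symmetric choice per point, $\rho^j_1<\min\{\epsilon_j/2,\,d(p_j,D^{\lambda(j)+1})\}$, and disjointness of same-rank balls falls out of the triangle inequality without any ordering or mutual reference. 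Both arguments rest on the same two structural facts (same-level points are isolated in $D^\lambda$; $D^{\lambda+1}$ is closed and misses them), but yours is cleaner in that it removes the inductive dependence and the need to verify that the tail-union $\{p_{i_h}\}_{h=k+1}^\infty\cup D^{\lambda+1}$ is closed; the paper's greedy version has the mild advantage that it generalizes immediately to the tail-selection argument of Corollary \ref{cor1}, where the radii must be drawn from prescribed sequences, though your construction adapts to that setting just as easily.
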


\begin{proof}
	For every ordinal $\lambda<\beta$, let $\{p_{i_k}\}_{k=1}^\infty=D^\lambda (\{p_i\}_{i=1}^\infty)\setminus D^{\lambda+1}(\{p_i\}_{i=1}^\infty)$. For $k=1$, it is enough to take $\rho_1^{i_1}$ such that $$\rho^{i_1}_1<d\bigg(p_{i_1},\{p_{i_h}\}_{h=2}^\infty\cup D^{\lambda+1}(\{p_i\}_{i=1}^\infty)\bigg)\text{,}$$as the two sets are closed and disjoint.
	
	For $k>1$, it is enough to take $\rho^{i_k}_1$ such that $$\rho^{i_k}_1<\min\biggl\{d\bigg(p_{i_k},\{p_{i_h}\}_{h=k+1}^\infty\cup D^{\lambda+1}\{p_i\}_{i=1}^\infty\bigg),d\bigg(p_{i_k},\bigcup_{h=1}^{k-1}\overline{B_{\rho^{i_h}_1}(p_{i_h})}\bigg)\biggr\}\text{.}$$
	
	For $\lambda=\beta$, there are only finitely many points $p_{i_j}\in D^\beta(\{p_i\}_{i=1}^\infty)$, and the choice of the radii $\rho^{i_j}_1$ follows readily.
\end{proof}

\begin{cor}\label{cor1}
	Let $\{p_i\}_{i=1}^\infty\in\mathcal{S}$. For any collection of sequences $\{a^i_n\}_{n\in\mathbb{N}}^{i\in\mathbb{N}}\subseteq\mathbb{R}$ such that for every $i\in\mathbb{N}$,
	\begin{itemize}
		\item $0<a^i_{n+1}<a^i_n$, and
		\item $\lim_{n\longrightarrow\infty}a^i_n=0$,
	\end{itemize}
	there exists a sequence of tails  $\{\rho^i_n\}_{n\in\mathbb{N}}^{i\in\mathbb{N}}$ satisfying Lemma \ref{lem_rhos}. That is, for every $i\in\mathbb{N}$, $\{\rho^i_n\}_{n=1}^\infty=\{a^i_{N_i+n}\}_{n=1}^\infty$, for some $N_i\in\mathbb{N}$.
\end{cor}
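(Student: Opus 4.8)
The plan is to treat Lemma~\ref{lem_rhos} purely as an existence statement and then exploit the fact that its two defining conditions are preserved when the radii are made smaller. First I would invoke Lemma~\ref{lem_rhos} to obtain a reference collection of admissible first radii $\{\hat\rho^i\}^{i\in\mathbb{N}}$, that is, positive numbers for which conditions 1 and 2 hold with $B_1^i$ replaced by $B_{\hat\rho^i}(p_i)$. The crucial observation is that both conditions are \emph{monotone} in the radii: if $0<r^i\le\hat\rho^i$ for every $i$, then the collection $\{r^i\}^{i\in\mathbb{N}}$ still satisfies conditions 1 and 2. Indeed, $B_{r^i}(p_i)\subseteq B_{\hat\rho^i}(p_i)$ gives $\overline{B_{r^i}(p_i)}\subseteq\overline{B_{\hat\rho^i}(p_i)}$, so a pair of closed balls that were disjoint stay disjoint, and a closed ball that missed $D^{\lambda+1}(\{p_i\}_{i=1}^\infty)$ continues to miss it.

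With this monotonicity in hand, I would construct the tails directly. For each $i\in\mathbb{N}$, since $a^i_n\to 0$ as $n\to\infty$, there is an index $N_i\in\mathbb{N}$ with $a^i_{N_i+1}\le\hat\rho^i$; I fix such an $N_i$ and set $\rho^i_n=a^i_{N_i+n}$ for all $n\in\mathbb{N}$. Because $\{a^i_n\}_{n=1}^\infty$ is strictly decreasing with limit $0$, each tail $\{\rho^i_n\}_{n=1}^\infty$ is again strictly decreasing and tends to $0$, so $\{\rho^i_n\}_{n\in\mathbb{N}}^{i\in\mathbb{N}}$ is an admissible collection of radii. Moreover $\rho^i_1=a^i_{N_i+1}\le\hat\rho^i$ for every $i$, so the monotonicity observation shows that the first radii $\{\rho^i_1\}^{i\in\mathbb{N}}$ satisfy conditions 1 and 2 of Lemma~\ref{lem_rhos}. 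This is exactly the assertion of the corollary, since those conditions only constrain the first radii $\rho^i_1$ (the balls $B^i_n$ for $n>1$ being contained in $B^i_1$).

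I expect the only point requiring genuine care to be the verification that conditions 1 and 2 are downward closed in the radii, rather than tied to the particular inductive choices made inside the proof of Lemma~\ref{lem_rhos}. This is where the argument could fail if the conditions constrained the radii from below; but they do not. Both are statements of the form ``certain closed balls are disjoint from certain fixed closed sets'' (either other closed balls or the closed set $D^{\lambda+1}(\{p_i\}_{i=1}^\infty)$), and disjointness from a fixed closed set is preserved when the ball shrinks. Everything else --- the strict monotonicity of the tails and their convergence to $0$ --- is immediate from the hypotheses on $\{a^i_n\}_{n\in\mathbb{N}}^{i\in\mathbb{N}}$, so no further estimates are needed.
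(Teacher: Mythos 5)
Your proposal is correct. The monotonicity observation is the right one: both conditions of Lemma~\ref{lem_rhos} are of the form ``a certain closed ball is disjoint from a certain fixed closed set'' (another closed ball, or $D^{\lambda+1}(\{p_i\}_{i=1}^\infty)$), so they survive any simultaneous shrinking of the radii, and since $a^i_n\to 0$ you can always drop below the reference radii $\hat\rho^i$ by passing to a tail. The paper takes a slightly different route: rather than invoking Lemma~\ref{lem_rhos} as a black box, it re-runs the lemma's transfinite/inductive construction verbatim, choosing $N_{i_k}$ so that $a^{i_k}_{N_{i_k}}$ falls below the same explicit distance thresholds used there (with the $k$-th threshold depending on the balls already fixed for $h<k$). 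The two arguments have the same mathematical content --- the conditions only bound the radii from above --- but yours factors that content out as a one-line monotonicity remark and thereby avoids repeating the induction, which is cleaner and makes transparent why tails of \emph{arbitrary} infinitesimal sequences suffice; the paper's version has the minor advantage of exhibiting the thresholds explicitly without presupposing that a valid collection $\{\hat\rho^i\}^{i\in\mathbb{N}}$ has already been produced. Your parenthetical that the conditions ``only constrain the first radii $\rho^i_1$'' is also the correct reading of Lemma~\ref{lem_rhos}, since the balls $B^i_n$ for $n>1$ are nested inside $B^i_1$.
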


\begin{proof}
	As in Lemma \ref{lem_rhos}, for every ordinal $\lambda<\beta$, let $\{p_{i_k}\}_{k=1}^\infty=D^\lambda (\{p_i\}_{i=1}^\infty)\setminus D^{\lambda+1}(\{p_i\}_{i=1}^\infty)$. For $k=1$, it is enough to take $N_{i_1}$ such that $$a^{i_1}_{N_{i_1}}<d\bigg(p_{i_1},\{p_{i_h}\}_{h=2}^\infty\cup D^{\lambda+1}(\{p_i\}_{i=1}^\infty)\bigg)\text{.}$$
	For $k>1$, it is enough to take $N_{i_k}$ such that $$a^{i_k}_{N_{i_k}}<\min\biggl\{d\bigg(p_{i_k},\{p_{i_h}\}_{h=k+1}^\infty\cup D^{\lambda+1}\{p_i\}_{i=1}^\infty\bigg),d\bigg(p_{i_k},\bigcup_{h=1}^{k-1}\overline{B_{a^{i_h}_{N_{i_h}}}(p_{i_h})}\bigg)\biggr\}\text{.}$$
	
	For $\lambda=\beta$, there are only finitely many points $p_{i_j}\in D^\beta(\{p_i\}_{i=1}^\infty)$. Therefore, one can choose the radii $N_{i_j}$ such that the closed balls $\overline{B^{i_j}_1}$ are pairwise disjoint.
\end{proof}

\begin{rem} \label{rem_rhos}
	Therefore, we can assume that, for every $i\in\mathbb{N}$, if $p_i\in D^\alpha(\{ p_k\}_{k=1}^\infty)\setminus D^{\alpha+1}(\{p_k\}_{k=1}^\infty)$, then for every $p_j\in \overline{B^i_1}$, there exists $\lambda<\alpha$ such that $p_j\in D^\lambda(\{p_k\}_{k=1}^\infty) \setminus D^{\lambda+1}(\{p_k\}_{k=1}^\infty) $. Furthermore, the second property assures that, if $p_j\in B_n^i$ for some $n\in\mathbb{N}$, then $B^j_m\subseteq B_{2\rho^i_n}(p_i)$ for all $m\in\mathbb{N}$.
\end{rem}


\subsection{Main results} \label{Results}

\begin{lem} \label{lem_main1}
Let $\{p_i\}_{i=1}^\infty$ be an element of $\mathcal{S}$, with $\{\rho_n^i\}_{n\in\mathbb{N}}^{i\in\mathbb{N}}$ as found in Lemma \ref{lem_rhos}. If $p_i\in\{p_j\}_{j=1}^\infty\setminus D(\{p_j\}_{j=1}^\infty)$, then $\mathfrak{T}(i)$ is residual.
\end{lem}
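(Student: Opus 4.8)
The plan is to realize $\mathfrak{T}(i)$ as a countable intersection of sets, each of which contains a dense open set, and then to invoke the Baire Category Theorem. Observe first that $x\in\mathfrak{T}(i)$ precisely when, for every $N\in\mathbb{N}$, there is some $n\geq N$ for which the trajectory $\mathcal{O}^+(x)$ first visits $B^i_n$ before any $B^j_n$ with $j\neq i$. Writing
$$U_N=\bigl\{x\in\mathcal{M}:\exists\,n\geq N\text{ with }\min\{k:f^k(x)\in B^i_n\}<\min\{k:f^k(x)\in B^j_n\}\ \text{ for all }j\neq i\bigr\},$$
we have $\mathfrak{T}(i)=\bigcap_{N\in\mathbb{N}}U_N$. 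It therefore suffices to produce, for each $N$, an open dense set $G_N\subseteq U_N$; then $\bigcap_N G_N$ is a dense $G_\delta$ contained in $\mathfrak{T}(i)$, and the Baire Category Theorem gives residuality.

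The key structural input is that $p_i$ is isolated in $\{p_j\}_{j=1}^\infty$: by Remark \ref{rem_rhos} the closure $\overline{B^i_1}$ then contains no center other than $p_i$, and $p_i$ inherits the properties defining $\mathcal{S}$, most importantly $\overline{\mathcal{O}^-(p_i)}=\mathcal{M}$ and $\mathcal{O}(p_i)\cap\mathcal{O}(p_j)=\emptyset$ for $j\neq i$. To show $G_N$ is dense, fix a nonempty open $W\subseteq\mathcal{M}$. Since the backward orbit of $p_i$ is dense, I would choose $m\geq1$ with $z:=f^{-m}(p_i)\in W$. The finite orbit segment $z,f(z),\dots,f^{m-1}(z)$ consists of backward iterates of $p_i$; by the disjointness of orbits, together with the fact that $p_i$ cannot be periodic (its backward orbit is dense in the infinite space $\mathcal{M}$), none of these points is a center, so $\eta:=d\bigl(\{f^k(z):0\leq k\leq m-1\},\{p_j\}_{j=1}^\infty\bigr)>0$.

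Now I would choose $n\geq N$ so large that every ball $B^j_n$, $j\in\mathbb{N}$, has radius less than $\eta/2$; continuity of $f^0,\dots,f^m$ then yields a neighborhood $W'\subseteq W$ of $z$ such that for every $x\in W'$ one has $f^m(x)\in B^i_n$ while $f^k(x)\notin\bigcup_j B^j_n$ for $0\leq k<m$. For such $x$ the trajectory first meets the level-$n$ balls exactly at time $m$, landing in $B^i_n$ (the level-$n$ balls being pairwise disjoint for large $n$), so $x\in U_N$. Hence $W'\subseteq G_N\cap W$, and $G_N$ is open and dense.

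The step I expect to be the main obstacle is justifying the choice of $n$ in the previous paragraph, namely that one may pick $n$ making all of the radii $\rho^j_n$, $j\in\mathbb{N}$, simultaneously smaller than $\eta/2$. This amounts to showing $\sup_j\rho^j_n\to0$ as $n\to\infty$, equivalently that for each $\epsilon>0$ only finitely many $j$ satisfy $\rho^j_1>\epsilon$. This is where compactness of $\mathcal{M}$ and the radius conditions of Lemma \ref{lem_rhos} must be combined: centers with large level-$1$ radius are forced to be mutually separated, while any accumulation point of infinitely many such centers would lie in the derived set and would, via property 2 of Lemma \ref{lem_rhos}, force the nearby radii to vanish, contradicting total boundedness. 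Once this uniform shrinking is established, the remainder is the routine perturbation and Baire-category bookkeeping sketched above.
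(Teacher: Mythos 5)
Your argument is sound and reaches the conclusion, but its key technical device differs from the paper's. Both proofs rest on the same skeleton: write $\mathfrak{T}(i)$ as a countable intersection, use $\overline{\mathcal{O}^-(p_i)}=\mathcal{M}$ to place a point $f^{-m}(p_i)$ in an arbitrary open set, and build an open set around it on which the first visit to the level-$n$ balls lands in $B^i_n$. Where you diverge is in how the competing balls $B^j_n$ are controlled. You first establish the uniform decay $\sup_j\rho^j_n\to0$ (via: only finitely many centers can have $\rho^j_1>\epsilon$, since an accumulation point of such centers would lie in a strictly higher derived set and property 2 of Lemma \ref{lem_rhos} would force the nearby radii below $\epsilon$), and then measure the distance $\eta$ from the orbit segment $f^{-m}(p_i),\dots,f^{-1}(p_i)$ to the \emph{whole} closed set of centers. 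The paper never proves uniform decay of the radii in this section (it only \emph{assumes} $\sup_i\rho^i_n\to0$ later, in the nowhere-dense case); instead it splits the centers into those lying in a finite subcover $B$ of the derived set $D$ --- whose balls are trapped near $D$ by Remark \ref{rem_rhos} and hence whose preimages stay $\sigma/2$-away from $f^{-m}(p_i)$ --- and the finitely many remaining centers, handled by orbit disjointness. Your route is arguably cleaner and unifies the countable case with the paper's later nowhere-dense argument, at the cost of the extra lemma on $\sup_j\rho^j_n$, whose proof you correctly identify as the crux and sketch accurately.

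One assertion needs repair: the parenthetical claim that ``the level-$n$ balls are pairwise disjoint for large $n$'' is false. If $p_{j_l}\to p_k$ with $p_k$ a center in the derived set, then for \emph{every} $n$ infinitely many $p_{j_l}$ lie in $B^k_n$, so $B^{j_l}_n\cap B^k_n\neq\emptyset$. Fortunately you only need the special case that $B^i_n$ is disjoint from every $B^j_n$ with $j\neq i$, and this does hold for large $n$: since $p_i$ is isolated, Lemma \ref{lem_rhos} gives $\overline{B^i_1}\cap\bigl(\mathrm{Cl}(\{p_j\}_{j=1}^\infty)\setminus\{p_i\}\bigr)=\emptyset$, these are disjoint closed sets with a positive gap $2\tau$, and once $\sup_j\rho^j_n<\tau$ every $B^j_n$ with $j\neq i$ misses $\overline{B^i_1}\supseteq B^i_n$. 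State and use this restricted version and the proof is complete.
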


\begin{proof}
	Recall that $x\in \mathfrak{T}(i)$ if, for infinitely many natural numbers $n$, $\min\{k\in\mathbb{N}:f^k(x)\in B^i_n\}<\min\{k\in\mathbb{N}:f^k(x)\in B^j_n,~j\neq i\}$.
	Let
	\begin{equation}\label{C_n}
		C_n=\bigcup_{m\in\mathbb{N}}\big(f^{-m}(B^i_n)\setminus\big(\bigcup_{h=1}^m\bigcup_{j\neq i}f^{-h}(B^j_n)\big)\big),
	\end{equation}
	so that $\mathfrak{T}(i)=\bigcap_{M\in\mathbb{N}}\bigcup_{n>M}C_n$. Therefore, it is enough to show that $\bigcup_{n>M}C_n$ contains a dense and open set. We proceed by showing that, for every $m\in\mathbb{N}$, there exists a large enough $n$ such that $C_n$ contains an open neighborhood of $f^{-m}(p_i)$.
	
	Let $D=D(\{p_j\}_{j\neq i})=D( \{p_j\}_{j=1}^\infty)$. Since $D$ is closed, the set $\bigcup_{h=0}^mf^{-h}(D)$ is closed, too. Then, since $\mathcal{O}(p_i)\cap\mathcal{O}(p_j)=\emptyset$ whenever $j\neq i$, we can define$$\sigma=d\big(f^{-m}(p_i),\bigcup_{h=0}^mf^{-h}(D)\big).$$The family of functions $\{f^{-1},\dots,f^{-m}\}$ is equicontinuous, because $f$ is a homeomorphism on a compact space. Thus, there exists a number $\delta>0$ such that for any $x,y\in\mathcal{M}$ with $d(x,y)<\delta<\sigma/2$, it follows that $d(f^{-h}(x),f^{-h}(y))<\sigma/2$, for all $h=1,\dots,m$.
	
	For every $p_j\in D$, there exists a natural $n(j)$ such that $B^j_{n(j)}\subseteq B_{\delta/2}(D)$.
	Thus$$\bigcup_{p_j\in D}B^j_{n(j)}\supseteq D$$ is an open cover of the compact set $D$. So, there exists a set $\{p_{j_1},\dots,p_{j_K}\}$ such that $$D\subseteq \bigcup_{k=1}^KB^{j_k}_{n(j_k)}=: B.$$Let $N_1=\max\{n(j_1),\dots,n(j_K)\}$. Since $B$ is an open neighborhood of $D$, $B$ contains all but finitely many points of $\{p_i\}_{i=1}^\infty$. Moreover, thanks to property 2 of Lemma \ref{lem_rhos}, and thanks to Remark \ref{rem_rhos}, if $p_j\in B$, then $B^j_{N_1}\subseteq B_\delta(D)$. Therefore,$$\bigcup_{h=0}^m\bigcup_{p_j\in B}f^{-h}(B^j_n)\subseteq\overline{B_{\sigma/2}(\bigcup_{h=0}^mf^{-h}(D))}\quad\quad\text{for every } n>N_1\text{.}$$
	
	Let $\{p_{j_1'},\dots,p_{j_{K'}'}\}=\{p_j\}_{j\neq i}\setminus B$. Since there are finitely many such points, and $\mathcal{O}(p_i)\cap\mathcal{O}(p_j)=\emptyset$ whenever $j\neq i$, we can find a natural number $N_2$ such that$$f^{-m}(p_i)\notin\bigcup_{h=0}^m\bigcup_{k=1}^{K'}f^{-h}(\overline{B^{i_k'}_n})\quad\quad\text{for every }n>N_2.$$
	
	Hence, for every $n>\max\{N_1,N_2\}$,$$\bigcup_{h=0}^m\bigcup_{j\neq i}f^{-h}(B^j_n)\subseteq\overline{B_{\sigma/2}(\bigcup_{h=0}^mf^{-h}(D))}\cup\bigg(\bigcup_{h=0}^m\bigcup_{k=1}^{K'}f^{-h}(\overline{B^{j_k'}_n})\bigg).$$
	Since $$f^{-m}(p_i)\notin\overline{B_{\sigma/2}(\bigcup_{h=0}^mf^{-h}(D))}\cup\bigg(\bigcup_{h=0}^m\bigcup_{k=0}^{K'}f^{-h}(\overline{B^{j_k'}_n})\bigg),$$ there is an open neighborhood of $f^{-m}(p_i)$ disjoint from that set.
\end{proof}

\begin{lem} \label{lem_main2}
Let $\{p_i\}_{i=1}^\infty$ be an element of $\mathcal{S}$, with $\{\rho_n^i\}_{n\in\mathbb{N}}^{i\in\mathbb{N}}$ as found in Lemma \ref{lem_rhos}. If $p_i\in D(\{p_j\}_{j=1}^\infty)$, then $\mathfrak{T}(i)$ is a residual set.
\end{lem}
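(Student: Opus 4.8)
The plan is to mirror the architecture of the proof of Lemma~\ref{lem_main1}, keeping the same sets $C_n$ of \eqref{C_n}, so that $\mathfrak{T}(i)=\bigcap_{M\in\mathbb N}\bigcup_{n>M}C_n$ and, by the Baire Category Theorem, it suffices to show that each $\bigcup_{n>M}C_n$ contains a dense open set. Since $\overline{\mathcal O^-(p_i)}=\mathcal M$, the backward orbit $\{f^{-m}(p_i):m\in\mathbb N_0\}$ is dense, so — exactly as before — I would fix $m$ and $M$ and produce an $n>M$ together with an open neighbourhood $U$ of $f^{-m}(p_i)$ with $U\subseteq C_n$; the union of these neighbourhoods over $m$ is then the required dense open set. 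Concretely I must arrange that $f^m(U)\subseteq B^i_n$ while $f^h(U)\cap B^j_n=\emptyset$ for every $1\le h\le m$ and every $j\ne i$. By equicontinuity of $\{f,\dots,f^m\}$ this reduces, with strict inequalities so that a small $U$ survives, to checking the corresponding statements for the single orbit segment $p_i,f^{-1}(p_i),\dots,f^{-(m-1)}(p_i)$.

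Let $\alpha\ge 1$ be the ordinal with $p_i\in D^\alpha(\{p_k\})\setminus D^{\alpha+1}(\{p_k\})$. The genuinely new feature, and the main obstacle, is that since $p_i\in D$ the competing centers $p_j$ now accumulate at $p_i$ itself, so there is no positive separation between $f^{-m}(p_i)$ and $\bigcup_{h}f^{-h}(D)$, and the covering step of Lemma~\ref{lem_main1} cannot be applied verbatim. I would resolve this with the rank structure recorded in Remark~\ref{rem_rhos}. Choose $N_0$ so large that $2\rho^i_{N_0}$ is smaller than the distance from $p_i$ to each of the finitely many points $f^{-1}(p_i),\dots,f^{-(m-1)}(p_i)$, and use the point's own ball $B^i_{N_0}$ to absorb the cluster of centers sitting at $p_i$. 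Split the centers $p_j$, $j\ne i$, into the near centers $p_j\in B^i_{N_0}$; the centers lying in a fixed finite open cover of the compact set $D\setminus B^i_{N_0}$; and the finitely many remaining centers. The last two families are dealt with exactly as in Lemma~\ref{lem_main1}: $D\setminus B^i_{N_0}$ is compact and at distance $\ge\rho^i_{N_0}$ from $p_i$, and the orbit points $p_i,f^{-1}(p_i),\dots,f^{-(m-1)}(p_i)$ avoid it with positive slack (the latter points lie off the closed set $\{p_k\}\supseteq D$); hence for a sufficiently fine cover and large $n$ the corresponding balls $B^j_n$ stay inside a thin neighbourhood of $D\setminus B^i_{N_0}$ and are missed, while the finitely many exceptional balls are missed once $n$ is large.

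It remains to handle the near centers, and this is where Remark~\ref{rem_rhos} does the work. Each near center $p_j\in B^i_{N_0}\subseteq\overline{B^i_1}$ has rank $\lambda<\alpha$, so Remark~\ref{rem_rhos} yields the containment $B^j_n\subseteq B_{2\rho^i_{N_0}}(p_i)$ for every $n$, while property~2 of Lemma~\ref{lem_rhos} gives $p_i\notin\overline{B^j_1}\supseteq\overline{B^j_n}$, since $p_i\in D^\alpha\subseteq D^{\lambda+1}$ is disjoint from $\overline{B^j_1}$. Thus at each time $1\le h\le m-1$ the orbit point $f^{-(m-h)}(p_i)$ lies at distance greater than $2\rho^i_{N_0}$ from $p_i$, hence outside $B_{2\rho^i_{N_0}}(p_i)$ and therefore outside every near ball simultaneously; at time $m$ the orbit is exactly at $p_i$, which lies in the target $B^i_n$ but, by the previous line, in no competing near ball. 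In discrete time the trajectory therefore jumps from outside the whole cluster $B_{2\rho^i_{N_0}}(p_i)$ directly onto $p_i$, slipping into $B^i_n$ ahead of all the surrounding lower-rank balls. Taking $n>M$ large enough to validate simultaneously the far-cover estimate, the exceptional balls, and $f^m(U)\subseteq B^i_n$, I obtain $U\subseteq C_n$, which completes the argument. The crux throughout is precisely that the accumulation of centers at $p_i$ is tamed by the facts that all such centers have strictly smaller rank and that their balls nest inside a small ball centred at $p_i$.
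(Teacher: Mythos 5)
There is a genuine gap at the final step. You correctly isolate the new difficulty (competing centers accumulate at $p_i$) and your rank/nesting analysis of the near balls via Remark~\ref{rem_rhos} is sound, but it only shows that the \emph{point} $f^{-m}(p_i)$ avoids every near ball $B^j_n$ at every time $1\le h\le m$. What you actually need for your strategy is an open neighbourhood $U$ of $f^{-m}(p_i)$ with $U\subseteq C_n$, and in particular $f^m(U)\cap B^j_n=\emptyset$ for all $j\ne i$. This is impossible: $f^m(U)$ is an open neighbourhood of $p_i$, and since $p_i\in D(\{p_j\}_{j=1}^\infty)$ it contains infinitely many centers $p_j$ with $j\ne i$, hence meets $B^j_n$ for those $j$. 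Although $d\bigl(p_i,\overline{B^j_n}\bigr)>0$ for each individual near $j$ (this is your correct use of property~2 of Lemma~\ref{lem_rhos}), the infimum of these distances over the infinitely many near centers is $0$, so there is no uniform slack and ``a small $U$ survives'' fails precisely at $h=m$. In other words, $f^{-m}(p_i)$ lies in $C_n$ but not in its interior, and the Lemma~\ref{lem_main1} template cannot be mirrored verbatim.

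The paper's proof repairs exactly this point by weakening the goal: instead of an open neighbourhood of $f^{-m}(p_i)$, it produces a non-empty open set $G\subseteq C_n$ with $f^{-m}(p_i)\in\partial(G)$. Concretely, with $S$ the set of near centers, it takes $G''=f^{-m}\bigl(B^i_n\setminus\overline{\bigcup_{p_j\in S\setminus\{p_i\}}B^j_n}\bigr)$; this is non-empty because the near balls are trapped in the $\tfrac{\epsilon}{2}$-neighbourhood of the closed countable (hence nowhere dense) set $S$, where $\epsilon=d_H(S,\overline{B^i_{\bar n}})>0$, so their closure cannot swallow $B^i_n$; and $p_i$ lies in the boundary of $\overline{\bigcup B^j_n}$, whence $f^{-m}(p_i)\in\partial(G'')$. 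Intersecting with your set $G'$ (the far-center estimate, which does give a genuine neighbourhood of $f^{-m}(p_i)$) yields $G=G'\cap G''\subseteq C_n$ with $f^{-m}(p_i)\in\partial(G)$. Density of $\bigcup_{n>M}C_n$'s open subset then still follows, because density only requires each point of the dense backward orbit to be a \emph{closure} point of the union of the sets $G$. If you replace your ``$U$ is a neighbourhood of $f^{-m}(p_i)$'' by this boundary formulation and add the non-emptiness argument for $G''$, your proof goes through.
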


\begin{proof}
Let $C_n$ be as established in \eqref{C_n}.
We will show that for every $m\in\mathbb{N}$, there exist a sufficiently large natural number $n$ and an open set $G\subset C_n$ such that $f^{-m}(p_i)\in\partial(G)$, where $\partial(G)$ denotes the boundary of $G$.

Since $p_i$ is not a periodic point, we can define$$\sigma=\min\{d(f^{-k}(p_i),f^{-h}(p_i)):0\leq k,h\leq m,~k\neq h\}.$$As before, there exists a $\delta>0$ such that if $d(x,y)<\delta<\sigma/2$, it follows that $d(f^{-h}(x),f^{-h}(y))<\sigma/2$, for $h=0,\dots,m$. Therefore, the sets $\{f^{-h}(B_\delta(p_i))\}_{h=0,\dots,m}$ are pairwise disjoint. Let $\Bar{n}$ be such that $B^i_{\Bar{n}}\subseteq B_\delta(p_i)$.

Let $T=\{p_j\}_{j=1}^\infty\setminus B^i_{\Bar{n}}$. As in the previous proof, we can find a natural number $N_1>\Bar{n}$ such that, for every $n>N_1$, there is an open subset $G'$ of $f^{-m}(B^i_n)\setminus\bigcup_{h=0}^m\bigcup_{p_j\in T}f^{-h}(B^j_n)$ such that $f^{-m}(p_i)\in G'$.

Let $S=\{p_j\}_{j=1}^\infty\cap \overline{B^i_{\Bar{n}}}$.
By the definition of $\delta$ and $\Bar{n}$, we have that$$f^{-m}(B^i_n)\setminus\bigcup_{h=0}^m\bigcup_{p_j\in S\setminus\{p_i\}}f^{-h}(B^j_n)=f^{-m}(B^i_n)\setminus\bigcup_{p_j\in S\setminus\{p_i\}}f^{-m}(B^j_n)\text{,}\quad\quad\text{for every } n>\Bar{n}.$$
Since both $S$ and $\overline{B^i_{\Bar{n}}}$ are closed, we take $$\epsilon=d_H(S,\overline{B^i_{\Bar{n}}})>0\text{.}$$For every $p_j\in S\setminus \{p_i\}$ there exist a $n(j)$ such that $\rho^j_{n(j)}<\frac{\epsilon}{4}$. Thus,$$B_{\epsilon/4}(p_i)\cup\bigcup_{p_j\in S\setminus\{p_i\}}B^j_{n(j)}\subseteq B_{\epsilon/4}(S)$$is an open cover of the compact set $S$. Therefore, there is a finite set $\{p_{j_1},\dots,p_{j_K}\}$ such that$$S\subseteq B_{\epsilon/4}(p_i)\cup\bigcup_{k=1}^KB^{j_k}_{n(j_k)}.$$Let $N_2=\max\{n(j_1),\dots,n(j_K)\}.$ As before, for every $n>N_2$, $\bigcup_{p_j\in S\setminus\{p_i\}}B^j_n\subseteq B_{\epsilon/2}(S).$ Furthermore, since $p_i\notin \overline{B^j_n}$ for every $p_j\in S\setminus\{p_i\}$, we have $p_i\notin\bigcup_{p_j\in S\setminus\{p_i\}}\overline{B^j_n}$, but $p_i\in\overline{\bigcup_{p_j\in S\setminus\{p_i\}}B^j_n}.$ Therefore, $p_i\in\partial(\overline{\bigcup_{p_j\in S\setminus\{p_i\}}B^j_n})$. Hence, for every $n>N_2$, the set $$G''=f^{-m}\bigg(B^i_n\setminus\overline{\bigcup_{p_j\in S\setminus\{p_i\}}B^j_n}\bigg)\subseteq f^{-m}(B^i_n)$$ is a non-empty open set, because $\overline{\bigcup_{p_j\in S\setminus\{p_i\}}B^j_n}\subseteq \overline{B_{\epsilon/2}(S)}$ and $\overline{B^i_{\Bar{n}}}\not\subseteq\overline{B_{\epsilon/2}(S)}$. Moreover, $f^{-m}(p_i)\in\partial(G'')$.

Finally, for every $n>\max\{N_1,N_2\}$, the set$$G=G'\cap G''$$ is an open subset of $f^{-m}(B^i_n)\setminus\bigcup_{h=0}^m\bigcup_{j\neq i}f^{-h}(B^j_n)$ such that $f^{-m}(p_i)\in\partial(G)$.
\end{proof}

\begin{thm}
	Let $\{p_i\}_{i=1}^\infty$ be an element of $\mathcal{S}$, with $\{\rho_n^i\}_{n\in\mathbb{N}}^{i\in\mathbb{N}}$ as found in Lemma \ref{lem_rhos}. For every $\{p_i\}_{i=1}^\infty\in\mathcal{S}$, the open dynamical system $(\mathcal{M},d,f,\mathfrak{S})$ is completely indecisive.
\end{thm}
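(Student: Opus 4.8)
The plan is to reduce the theorem to the two main lemmas already at hand, which together do all of the real work. Fix any $\{p_i\}_{i=1}^\infty\in\mathcal{S}$, with radii $\{\rho_n^i\}_{n\in\mathbb{N}}^{i\in\mathbb{N}}$ chosen as in Lemma \ref{lem_rhos}. For each index $i\in\mathbb{N}$, there are exactly two mutually exclusive possibilities: either $p_i\in\{p_j\}_{j=1}^\infty\setminus D(\{p_j\}_{j=1}^\infty)$, i.e.\ $p_i$ is isolated in the sequence, or $p_i\in D(\{p_j\}_{j=1}^\infty)$, i.e.\ $p_i$ belongs to the derived set. In the first case Lemma \ref{lem_main1} gives that $\mathfrak{T}(i)$ is residual, and in the second case Lemma \ref{lem_main2} gives the same conclusion. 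Since these two cases exhaust all possibilities, I conclude that $\mathfrak{T}(i)$ is residual for \emph{every} $i\in\mathbb{N}$.

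Next I would invoke the definition of complete indecisiveness together with the structure of $\mathfrak{T}$. By Definition \ref{defT11}, a point belongs to $\mathfrak{T}$ precisely when it lies in $\bigcap_{j=1}^\infty\mathfrak{T}(j)$; thus $\mathfrak{T}=\bigcap_{j=1}^\infty\mathfrak{T}(j)$ is a \emph{countable} intersection of the sets just shown to be residual. I would then use the standard Baire-category fact that a countable intersection of residual sets in a complete metric space is again residual. Concretely, writing
$$\mathcal{M}\setminus\mathfrak{T}=\bigcup_{j=1}^\infty\bigl(\mathcal{M}\setminus\mathfrak{T}(j)\bigr),$$
each set $\mathcal{M}\setminus\mathfrak{T}(j)$ is of the first category, being the complement of the residual set $\mathfrak{T}(j)$; a countable union of first-category sets is of the first category, so $\mathcal{M}\setminus\mathfrak{T}$ is of the first category. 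Hence $\mathfrak{T}$ is residual. It is worth recording at this point that $\mathcal{M}$, being compact metric, is a complete metric space, so the Baire Category Theorem applies.

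Finally, by the very definition of a completely indecisive open dynamical system, the system $(\mathcal{M},d,f,\mathfrak{S})$ is completely indecisive exactly when $\mathfrak{T}$ is residual, which is what I have established. Since $\{p_i\}_{i=1}^\infty\in\mathcal{S}$ was arbitrary, the conclusion holds for every element of $\mathcal{S}$.

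I do not anticipate any genuine obstacle in this proof: all the delicate analysis—the careful choice of radii via Lemma \ref{lem_rhos} and Remark \ref{rem_rhos}, and the construction of dense or boundary open neighborhoods of the preimages $f^{-m}(p_i)$ inside the sets $C_n$—is already absorbed into Lemmas \ref{lem_main1} and \ref{lem_main2}. The only point requiring even minimal care is the passage from residuality of each $\mathfrak{T}(i)$ to residuality of the countable intersection $\mathfrak{T}$, which is immediate once one notes that first category is preserved under countable unions. The theorem is therefore essentially a clean assembly of the two preceding lemmas.
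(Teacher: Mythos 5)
Your proof is correct and follows exactly the paper's own argument: split on whether $p_i$ is isolated in the sequence or lies in the derived set, apply Lemma \ref{lem_main1} or Lemma \ref{lem_main2} respectively to get that each $\mathfrak{T}(i)$ is residual, and conclude that the countable intersection $\mathfrak{T}=\bigcap_{i=1}^\infty\mathfrak{T}(i)$ is residual. The only difference is that you spell out the Baire-category bookkeeping that the paper leaves implicit.
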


\begin{proof}
By Lemma \ref{lem_main1} and Lemma \ref{lem_main2}, $\mathfrak{T}(i)$ is residual for every $i\in\mathbb{N}$. It follows that $\mathfrak{T}=\cap_{i=1}^\infty \mathfrak{T}(i)$ is residual, too.
\end{proof}

Let $\mathcal{S}'$ be the set of sequences $\{p_i\}_{i=1}^\infty$ such that
\begin{itemize}
	\item Cl$(\{p_i\}_{i=1}^\infty)$ is countable;
	\item $\overline{\mathcal{O}^-(p_i)}=\mathcal{M}$, for all $i\in\mathbb{N}$;
	\item $\mathcal{O}(p_i)\cap\mathcal{O}(p_j)=\emptyset$, for every $i,j\in\mathbb{N}$, whenever $i\neq j$, and
	\item $\mathcal{O}(p_i)\cap \mathcal{O}(\text{Cl}(\{p_j\}_{j=1}^\infty)\setminus\{p_i\})=\emptyset$ for every $i\in\mathbb{N}$.
\end{itemize}
\begin{cor}
	Let $\{p_i\}_{i=1}^\infty$ be an element of $\mathcal{S'}$, with $\{\rho_n^i\}_{n\in\mathbb{N}}^{i\in\mathbb{N}}$ as found in Lemma \ref{lem_rhos}.	
	The open dynamical system $(\mathcal{M},d,f,\mathfrak{S})$ is completely indecisive.
\end{cor}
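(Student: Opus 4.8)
The plan is to deduce the corollary from the Theorem just proved for $\mathcal{S}$, by passing to the closure and checking that the two structural lemmas survive the weaker hypotheses of $\mathcal{S}'$. First I would set $C=\text{Cl}(\{p_i\}_{i=1}^\infty)$; since $\{p_i\}\in\mathcal{S}'$ this set is countable and closed, and its derived set coincides with $D=D(\{p_j\}_{j=1}^\infty)$ (because $D(\overline A)=D(A)$ in a metric space). The construction in Lemma \ref{lem_rhos} uses only distances to the successive derived sets and invokes no orbit hypotheses, so I can run that construction on the closed countable set $C$ and keep, for each center $p_i$, the radius it is assigned there; this is the meaning of ``radii as in Lemma \ref{lem_rhos}'' in the present setting, and it equips the whole closure with the nested-ball control recorded in Remark \ref{rem_rhos}. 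The goal is then to show that $\mathfrak{T}(i)$ is residual for every $i\in\mathbb{N}$, whence $\mathfrak{T}=\bigcap_{j=1}^\infty\mathfrak{T}(j)$ is residual by the Baire Category Theorem and the system is completely indecisive.

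To show $\mathfrak{T}(i)$ is residual I would, exactly as in the Theorem, split on whether $p_i\notin D$ or $p_i\in D$ and re-run the proofs of Lemma \ref{lem_main1} and Lemma \ref{lem_main2} respectively, keeping $C_n$ as in \eqref{C_n} and taking $D\subseteq C$. The task is then to verify that every ingredient survives. All of the orbit-disjointness invoked in those proofs---whether to define $\sigma=d(f^{-m}(p_i),\bigcup_{h=0}^m f^{-h}(D))$ in Lemma \ref{lem_main1}, or to locate $N_1$ and $N_2$ in Lemma \ref{lem_main2}---amounts to the statement that $\mathcal{O}(p_i)$ is disjoint from the orbit of every other point of the closure, which is precisely the fourth defining property of $\mathcal{S}'$, namely $\mathcal{O}(p_i)\cap\mathcal{O}(\text{Cl}(\{p_j\})\setminus\{p_i\})=\emptyset$. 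Non-periodicity of $p_i$, needed for $\sigma$ in Lemma \ref{lem_main2}, follows from $\overline{\mathcal{O}^-(p_i)}=\mathcal{M}$ together with $\mathcal{M}$ having no isolated points, and the density of $\bigcup_{n>M}C_n$ still comes from $\{f^{-m}(p_i)\}_{m}$ being dense, again a consequence of $\overline{\mathcal{O}^-(p_i)}=\mathcal{M}$. The equicontinuity of $\{f^{-1},\dots,f^{-m}\}$ and every local construction around $f^{-m}(p_i)$ are untouched.

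The hard part will be the covering step. In the proof of Lemma \ref{lem_main1} the compact set $D$ is covered by finitely many of the system's own balls $B^{j_k}_{n(j_k)}$ centered at points $p_{j_k}\in D$; under $\mathcal{S}'$ a limit point $q\in D$ need not be a center, and in fact the radius construction works against a naive fix, since any center $p_j$ clustering at $q$ has $\rho^j_1<d(p_j,q)$, so its ball is too small to reach $q$ and the balls of nearby centers cannot cover $D$. I would therefore bound things by distance to $D$ rather than by a ball-cover: the set $\{p_j:d(p_j,D)\ge\delta/2\}$ is finite (an accumulation point of it would lie in $D$ yet be at distance $\ge\delta/2$ from $D$), and for a center $p_j\notin D$ with $d(p_j,D)<\delta/2$ the inequality of Lemma \ref{lem_rhos} at derived level $0$ gives $\rho^j_1<d(p_j,D)<\delta/2$ directly, so $B^j_n\subseteq B_\delta(D)$ for every $n$; the analogous level-$\lambda$ inequalities, organized by derived rank, trap the balls of the remaining clustering centers inside $B_\delta(D)$ once $n$ exceeds a uniform $N_1$, while the finitely many stray centers are dispatched as in the original $N_2$ step by taking $n$ large so their preimages miss $f^{-m}(p_i)$. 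The same replacement handles the set $S$ in Lemma \ref{lem_main2}: the boundary relation $f^{-m}(p_i)\in\partial(G'')$ is produced from centers of $S$ clustering at $p_i$ (which exist precisely because $p_i\in D$), and the Hausdorff-distance argument goes through verbatim once the control of $\bigcup_{p_j\in S\setminus\{p_i\}}B^j_n$ is phrased through distance to $D$ and the built-in radius bound rather than through balls centered on absent limit points.
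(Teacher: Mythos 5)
Your proposal is correct in outline but takes a much more explicit route than the paper, which disposes of this corollary in two lines: it observes that the set $\mathcal{I}'$ of sequences whose centers have orbits disjoint from the orbits of all other closure points is residual (citing \cite{fs}), and then says to rerun the constructions of Lemmas \ref{dense-set-1}--\ref{dense-set-2} with $\mathcal{I}'$ in place of $\mathcal{I}$; it never revisits the proofs of Lemmas \ref{lem_main1} and \ref{lem_main2}. You instead reopen those proofs and check each ingredient against the weaker hypotheses of $\mathcal{S}'$, and in doing so you put your finger on the one step that genuinely uses closedness of the sequence: the finite subcover of $D$ (and of $S$ in Lemma \ref{lem_main2}) by balls $B^{j_k}_{n(j_k)}$ of the system itself, which is unavailable when a limit point is not a center. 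Your replacement --- splitting the centers by distance to $D$, using finiteness of $\{p_j : d(p_j,D)\ge\delta/2\}$ and the level-$0$ bound $\rho^j_1<d(p_j,D)$ from Lemma \ref{lem_rhos} --- is the right fix and is arguably more informative than what the paper records.

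The one place you assert rather than prove is the claim that the level-$\lambda$ inequalities ``trap the balls of the remaining clustering centers inside $B_\delta(D)$ once $n$ exceeds a uniform $N_1$.'' For the infinitely many centers that lie \emph{inside} $D$ the inequality $\rho^j_1<d(p_j,D^{\lambda+1})$ gives no bound in terms of $\delta$ by itself, so uniformity needs an argument. It does hold: if infinitely many centers $p_j\in D$ had $\rho^j_1\ge\delta$, they would accumulate at some $q\in D^2$ of rank $\mu\ge 2$; all but finitely many of them have rank $\lambda<\mu$ (otherwise $q\in D^{\mu+1}$), and for those $q\in D^{\lambda+1}$ forces $\rho^j_1<d(p_j,q)\longrightarrow 0$, a contradiction. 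Hence all but finitely many centers satisfy $B^j_n\subseteq B_\delta(D)$ for every $n$, and the finitely many exceptions are absorbed into the $N_2$ step exactly as you say. With that paragraph added your proof is complete, and it is self-contained where the paper's is not.
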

\begin{proof}
	The set $\mathcal{I}'=\{\{p_i\}_{i=1}^\infty\in\mathcal{I}:\mathcal{O}(p_i)\cap\mathcal{O}(\text{Cl}(\{p_j\}_{j=1}^\infty)\setminus\{p_i\})=\emptyset\text{, for every }i\in\mathbb{N}\}$ is residual, as shown in \cite{fs}. Thus, following the proofs found in Lemma \ref{dense-set-1}, \ref{recur1}, \ref{recur2} and \ref{dense-set-2}, and using $\mathcal{I}'$ instead of $\mathcal{I}$, we have the desired statement.
\end{proof}

\section{Cl$(\{p_i\}_{i=1}^\infty)$ is nowhere dense}\label{nwd-closure}
In this section, we consider the case in which Cl$(\{p_i\}_{i=1}^\infty)$ is nowhere dense, regardless of whether it is countable or not. Let $\mathcal{T}$ denote the set of sequences $\{p_i\}_{i=1}^\infty$ such that $\mathcal{O}(p_i)\cap\mathcal{O}(\text{Cl}(\{p_j\}_{j=1}^\infty)\setminus\{p_i\})=\emptyset$, so that
\begin{itemize}
	\item $\mathcal{O}(p_i)\cap\mathcal{O}(p_j)=\emptyset$, whenever $i\neq j$;
	\item $\mathcal{O}(p_i)\cap\mathcal{O}(D(\{p_j\}_{j=1}^\infty)\setminus\{p_i\})=\emptyset$, for every $i\in\mathbb{N}$, and
	\item $D(\{p_j\}_{j=1}^\infty)$ is nowhere dense.
\end{itemize}
Consider also the set $\mathcal{A}$ of sequences such that
\begin{itemize}
	\item $\mathcal{O}(p_i)\cap\mathcal{O}(p_j)=\emptyset$, whenever $i\neq j$;
	\item $\mathcal{O}(p_i)\cap\mathcal{O}(D(\{p_j\}_{j=1}^\infty))=\emptyset$, for every $i\in\mathbb{N}$, and
	\item $\{p_i\}_{i=1}^\infty\cap D(\{p_i\}_{i=1}^\infty)=\emptyset$.
\end{itemize}
Thanks to Lemmas 7, 8 and 9 of \cite{fs}, we know that $\mathcal{A}$ is a residual set. We also have that $\mathcal{A}\subseteq\mathcal{T}$. Thus, $\mathcal{T}$ is residual, too.

\begin{lem} \label{rhos2}
	Let $\{p_i\}_{i=1}^\infty$ be an element of $\mathcal{T}$. For every $i\in\mathbb{N}$, we can take $B^i_1$ such that $p_j\notin\overline{B^i_1}$ for every $1\leq j<i$.
\end{lem}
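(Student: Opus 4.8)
The plan is to reduce the claim, for each fixed $i$, to a condition involving only the finitely many centers $p_1,\dots,p_{i-1}$. It is worth noting at the outset that one cannot hope to choose $\rho^i_1$ so that $\overline{B^i_1}$ avoids \emph{all} centers $p_j$ with $j\neq i$: since $\{p_j\}_{j=1}^\infty\in\mathcal{T}$ may well have $p_i$ as a point of its derived set, infinitely many centers can accumulate at $p_i$, and then no positive radius excludes them all. The content of the lemma is precisely that avoiding only the \emph{earlier-indexed} centers $p_1,\dots,p_{i-1}$ is a finite constraint, and hence always satisfiable.

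First I would fix $i\in\mathbb{N}$. For $i=1$ the statement is vacuous, so assume $i\geq 2$ and set $\sigma_i=\min\{d(p_i,p_j):1\leq j<i\}$. This is a minimum over a finite, nonempty index set, and since the centers are pairwise distinct each distance $d(p_i,p_j)$ is strictly positive; therefore $\sigma_i>0$.

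Then I would choose the first radius so that $0<\rho^i_1<\sigma_i$. For every $j$ with $1\leq j<i$ we then have $d(p_i,p_j)\geq\sigma_i>\rho^i_1$. Since every point $y\in\overline{B^i_1}=\overline{B_{\rho^i_1}(p_i)}$ satisfies $d(p_i,y)\leq\rho^i_1$ (by continuity of the metric, the closure of the open ball is contained in the corresponding closed metric ball), the strict inequality $d(p_i,p_j)>\rho^i_1$ forces $p_j\notin\overline{B^i_1}$, as required.

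There is no genuine obstacle here; the only subtlety is the one already flagged, namely that the finiteness is essential. Were we required to avoid all $p_j$ with $j\neq i$, no choice of $\rho^i_1>0$ would work in general, whereas restricting to the indices $j<i$ turns an infinite family of constraints into a finite one, and the positivity of $\sigma_i$ then does all the work. Finally, note that this argument only bounds the largest radius $\rho^i_1$ from above, and so is compatible with any subsequent requirement that $\{\rho^i_n\}_{n=1}^\infty$ be decreasing with $\lim_{n\to\infty}\rho^i_n=0$.
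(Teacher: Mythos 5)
Your proof is correct and takes essentially the same route as the paper, which simply chooses $\rho^i_1<d(p_i,\{p_1,\dots,p_{i-1}\})$; your $\sigma_i$ is exactly this distance, and your extra remarks (the finiteness of the constraint set, the inclusion of $\overline{B^i_1}$ in the closed metric ball) just make explicit what the paper leaves implicit.
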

\begin{proof}
	It is enough to take $\rho^i_1<d(p_i,\{p_1,\dots,p_{i-1}\}).$
\end{proof}
In the sequel, we consider a sequence of sequences $\{\rho^i_n\}_{n\in\mathbb{N}}^{i\in\mathbb{N}}$ that satisfies the property of Lemma \ref{rhos2}, and such that $\sup_i\rho^i_n\xrightarrow{n\longrightarrow\infty}0$.

As with Corollary \ref{cor1}, we first show that an appropriate collection of radii $\{\rho_n^i\}_{n\in\mathbb{N}}^{i\in\mathbb{N}}$ can be extracted from the tails of any set of infinitesimal sequences $\{a_n^i\}_{n\in\mathbb{N}}^{i\in\mathbb{N}}$.

\begin{cor}
	Let $\{p_i\}_{i=1}^\infty\in\mathcal{T}$. For any collection of sequences $\{a^i_n\}_{n\in\mathbb{N}}^{i\in\mathbb{N}}\subseteq\mathbb{R}$ such that for every $i\in\mathbb{N}$,
	\begin{itemize}
		\item $0<a^i_{n+1}<a^i_n$, and
		\item $\lim_{n\longrightarrow\infty}a^i_n=0$,
	\end{itemize}
	there exists a sequence of tails $\{\rho^i_n\}_{n\in\mathbb{N}}^{i\in\mathbb{N}}$, such that $p_j\in\overline{B^1_i}$ whenever $1\leq j<i$, and $\sup_i \rho^i_n\longrightarrow0$ as $n\longrightarrow\infty$. That is, for every $i\in\mathbb{N}$, $\{\rho^i_n\}_{n=1}^\infty=\{a^i_{N_i+n}\}_{n=1}^\infty$, for some $N_i\in\mathbb{N}$.
\end{cor}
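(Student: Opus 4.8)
The plan is to follow the template of Corollary~\ref{cor1}: for each $i$ I will choose a threshold $N_i\in\mathbb{N}$ and set $\rho^i_n=a^i_{N_i+n}$, picking $N_i$ large enough to secure \emph{simultaneously} the separation demanded by Lemma~\ref{rhos2} (namely $p_j\notin\overline{B^i_1}$ for $1\le j<i$) and the uniform decay $\sup_i\rho^i_n\to0$. Since each $a^i_n\to0$, every bound of the form ``$a^i_{N_i}<\epsilon$'' is eventually met, so the only real question is whether a single $N_i$ can be made to meet all the required bounds at once; as both requirements merely force $N_i$ upward, this will cause no conflict.

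For the separation, fix $i\ge2$ and set $d_i=d(p_i,\{p_1,\dots,p_{i-1}\})$. Because $\{p_k\}_{k=1}^\infty\in\mathcal{T}$ consists of pairwise distinct points, $d_i$ is a minimum of finitely many strictly positive distances and hence $d_i>0$. First I would choose $N_i$ so large that $a^i_{N_i}<d_i$; then $\rho^i_1=a^i_{N_i+1}<a^i_{N_i}<d_i\le d(p_i,p_j)$ for every $1\le j<i$, so that $p_j\notin\overline{B^i_1}$, exactly the property of Lemma~\ref{rhos2}. For $i=1$ this condition is vacuous.

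The hard part will be the global requirement $\sup_i\rho^i_n\to0$, which does \emph{not} follow from the per-index tails on their own: if the sequences decay more and more slowly in $i$ (for instance $a^i_m=i/(i+m)$), then any uniformly bounded choice of the $N_i$ leaves $\sup_i a^i_{N_i+n}$ bounded away from $0$ for every $n$. To overcome this I would strengthen the choice so that also $a^i_{N_i}<1/i$; combining both demands, I take $N_i$ with $a^i_{N_i}<\min\{d_i,\,1/i\}$, which is possible since $\min\{d_i,1/i\}>0$ and $a^i_n\to0$. Then $\rho^i_n=a^i_{N_i+n}<a^i_{N_i}<1/i$ for every $n\ge1$, and to conclude $\sup_i\rho^i_n\to0$ I would split the index set: given $\epsilon>0$, pick $I$ with $1/I<\epsilon$; for every $i\ge I$ and every $n$ we already have $\rho^i_n<1/i\le1/I<\epsilon$, while for the finitely many indices $i<I$ each $\rho^i_n\to0$ as $n\to\infty$, so there is $n^\ast$ past which all of them drop below $\epsilon$; hence $\sup_i\rho^i_n\le\epsilon$ for $n\ge n^\ast$. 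With these $N_i$, the tails $\{\rho^i_n\}_{n=1}^\infty=\{a^i_{N_i+n}\}_{n=1}^\infty$ realize both the property of Lemma~\ref{rhos2} and $\sup_i\rho^i_n\to0$, as required. I expect the only delicate point to be the coexistence of the per-index separation bound $d_i$ with the uniform-decay bound $1/i$, resolved precisely because each constraint only forces $N_i$ larger.
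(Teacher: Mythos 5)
Your proof is correct and follows essentially the same route as the paper's: you choose $N_i$ so that $\rho^i_1=a^i_{N_i+1}<\min\{d(p_i,\{p_1,\dots,p_{i-1}\}),1/i\}$, and then verify $\sup_i\rho^i_n\to 0$ by splitting the index set, with large $i$ handled by the $1/i$ bound and the finitely many small $i$ by the convergence of their individual tails---exactly the paper's argument. The only difference is expository: you make explicit the failure mode (slowly decaying sequences like $a^i_m=i/(i+m)$) that the $1/i$ bound is there to prevent, which the paper leaves implicit.
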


\begin{proof}
	For every $i\in\mathbb{N}$ it is enough to take $N_i\in\mathbb{N}$ such that
	$$\rho^i_1=a^i_{N_i+1}<\min\biggl\{d(p_i,\{p_1,\dots p_{i-1}\}),\frac{1}{i}\biggr\}\text{.}$$
	Thus, $\rho^i_n<1/i$ for every $n\in\mathbb{N}$, and for every $i\in\mathbb{N}$. We now show that this implies that $\sup_{i\in\mathbb{N}}\rho^i_n\xrightarrow{n\longrightarrow\infty}0$. Since $\sup_{i\in\mathbb{N}}\rho^i_{n+1}\leq\sup_{i\in\mathbb{N}}\rho^i_n$, it is enough to show that for any $\epsilon>0$, there exists $N\in\mathbb{N}$ such that $\sup_{i\in\mathbb{N}}
	\rho^i_N<\epsilon$. Let $i\in\mathbb{N}$ such that $1/i<\epsilon$. Then, $\rho^j_n<1/i<\epsilon$ for every $n\in\mathbb{N}$ and for every $j\geq i$. Since $\{\rho^1_n\}_{n=1}^\infty,\dots,\{\rho^{i-1}_n\}_{n=1}^\infty$ are finite, there exists $N\in\mathbb{N}$ such that $\rho^j_N<\epsilon$ for every $j=1,\dots,i-1$. Hence, $\sup_{i\in\mathbb{N}}\rho^i_N<\epsilon$.
\end{proof}

\begin{rem}
	Lemma \ref{rhos2} implies that, if $p_j\in B^i_1$, then $p_i\notin\overline{B^j_1}$.
\end{rem}
\begin{thm}
	Let $\{p_i\}_{i=1}^\infty\in\mathcal{T}$. Then, for every $i\in\mathbb{N}$, $\mathfrak{T}(i)$ is residual. Thus, $\mathfrak{T}$ is residual, and the open dynamical system $(\mathcal{M},d,f,\mathfrak{S})$ is completely indecisive.
\end{thm}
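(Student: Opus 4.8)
The plan is to re-run the arguments of Lemmas \ref{lem_main1} and \ref{lem_main2}, which established the residuality of $\mathfrak{T}(i)$ in the countable-closure setting, and to check that the only place where countability of $D(\{p_j\}_{j=1}^\infty)$ was used --- namely the covering of the derived set by balls centered at its own points --- can be replaced by an argument valid for an arbitrary compact nowhere dense derived set. As there, I would fix $i$ and $m$, recall $C_n$ from \eqref{C_n}, and show that $\bigcup_{n>M}C_n$ contains a dense open set for every $M$. Since $\mathfrak{T}(i)=\bigcap_{M\in\mathbb{N}}\bigcup_{n>M}C_n$, the Baire Category Theorem then yields that each $\mathfrak{T}(i)$ is residual, and a countable intersection gives that $\mathfrak{T}=\bigcap_{i=1}^\infty\mathfrak{T}(i)$ is residual, i.e. that the system is completely indecisive.

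I would split into the same two cases as before. If $p_i\notin D(\{p_j\}_{j=1}^\infty)$ I follow Lemma \ref{lem_main1}: set $D=D(\{p_j\}_{j=1}^\infty)$, note that $\bigcup_{h=0}^m f^{-h}(D)$ is closed and, because $p_i\notin D$ forces $\mathcal{O}(p_i)\cap\mathcal{O}(D)=\emptyset$ by the orbit-disjointness built into $\mathcal{T}$, that it omits $f^{-m}(p_i)$; hence $\sigma=d(f^{-m}(p_i),\bigcup_{h=0}^m f^{-h}(D))>0$, and equicontinuity of $\{f^{-1},\dots,f^{-m}\}$ supplies the usual $\delta$. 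If instead $p_i\in D(\{p_j\}_{j=1}^\infty)$ I follow Lemma \ref{lem_main2}, working with $\sigma=\min\{d(f^{-k}(p_i),f^{-h}(p_i)):0\le k,h\le m,\ k\ne h\}$ (positive since $p_i$, having dense backward orbit, is not periodic) and producing an open set $G\subseteq C_n$ with $f^{-m}(p_i)\in\partial(G)$; here the centers lying in $\overline{B^i_{\bar n}}$ form a countable subset of $\{p_j\}_{j=1}^\infty$, so the covering step of that proof is untouched.

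The one genuinely new point, and the step I expect to be the main obstacle, is the treatment of $D$ when it is uncountable: in the countable case one wrote $D\subseteq\bigcup_{k=1}^K B^{j_k}_{n(j_k)}$ with the covering balls centered at points $p_{j_k}\in D$, which is impossible once $D\not\subseteq\{p_j\}_{j=1}^\infty$. Instead I would cover the compact nowhere dense set $D$ by the single open neighborhood $B_{\delta/2}(D)$, observe that all but finitely many of the $p_j$ lie in it --- any open neighborhood of $D$ omits only finitely many $p_j$, since an infinite omitted set would accumulate at a point of $\mathcal{M}\setminus B_{\delta/2}(D)$ that nonetheless lies in $D$ --- and invoke the standing normalization $\sup_i\rho^i_n\xrightarrow{n\to\infty}0$ adopted after Lemma \ref{rhos2} to choose $N_1$ with $\rho^j_n<\delta/2$ for all $j$ once $n>N_1$. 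Then each center $p_j\in B_{\delta/2}(D)$ satisfies $B^j_n\subseteq B_\delta(D)$, so $\bigcup_{h=0}^m\bigcup_{p_j\in B_{\delta/2}(D)}f^{-h}(B^j_n)\subseteq\overline{B_{\sigma/2}(\bigcup_{h=0}^m f^{-h}(D))}$, a set missing $f^{-m}(p_i)$. The finitely many exceptional centers $p_j\notin B_{\delta/2}(D)$ are dispatched exactly as in Lemma \ref{lem_main1}, using $\mathcal{O}(p_i)\cap\mathcal{O}(p_j)=\emptyset$ to obtain a further threshold $N_2$; for $n>\max\{N_1,N_2\}$ one then gets the required open neighborhood (respectively boundary open set) of $f^{-m}(p_i)$ inside $C_n$.

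Finally, density of the resulting family of open sets is inherited, just as in Lemmas \ref{lem_main1} and \ref{lem_main2}, from $\overline{\mathcal{O}^-(p_i)}=\mathcal{M}$: letting $m$ range over $\mathbb{N}$, the open sets sit over the dense set $\{f^{-m}(p_i):m\in\mathbb{N}\}$, directly when $p_i\notin D$ and through their boundaries when $p_i\in D$, so their union is a dense open subset of $\bigcup_{n>M}C_n$. I would then double-check that the two orbit-disjointness conditions defining $\mathcal{T}$, namely $\mathcal{O}(p_i)\cap\mathcal{O}(p_j)=\emptyset$ and $\mathcal{O}(p_i)\cap\mathcal{O}(D(\{p_j\}_{j=1}^\infty)\setminus\{p_i\})=\emptyset$, are precisely what is needed to guarantee $\sigma>0$ and to keep $f^{-m}(p_i)$ outside the relevant preimages in both cases, so that nothing beyond membership in $\mathcal{T}$ together with the radii normalization is used.
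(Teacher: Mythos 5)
Your overall strategy --- reducing to the claim that for each $m$ and all large $n$ the set $C_n$ contains an open set having $f^{-m}(p_i)$ in its interior or on its boundary, and replacing the countable covering of the derived set by the single neighborhood $B_{\delta/2}(D)$ together with the normalization $\sup_i\rho^i_n\xrightarrow{n\to\infty}0$ --- is sound, and your treatment of the case $p_i\notin D(\{p_j\}_{j=1}^\infty)$ is correct. (For comparison, the paper does not split into two cases: it runs the boundary construction of Lemma \ref{lem_main2} for every $i$, decomposing $\mathrm{Cl}(\{p_j\}_{j=1}^\infty)$ into $T=\mathrm{Cl}(\{p_j\}_{j=1}^\infty)\setminus B^i_{\bar n}$ and $S=\mathrm{Cl}(\{p_j\}_{j=1}^\infty)\cap\overline{B^i_{\bar n}}$.)

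The gap is in the case $p_i\in D(\{p_j\}_{j=1}^\infty)$, where you assert that ``the centers lying in $\overline{B^i_{\bar n}}$ form a countable subset of $\{p_j\}_{j=1}^\infty$, so the covering step of that proof is untouched.'' Countability is not what makes that step of Lemma \ref{lem_main2} work. There, $S=\{p_j\}_{j=1}^\infty\cap\overline{B^i_{\bar n}}$ is \emph{closed} (sequences in $\mathcal{S}$ are closed), hence compact; every point of $S$ is a center, so the balls $B_{\epsilon/4}(p_i)\cup\bigcup_{p_j\in S\setminus\{p_i\}}B^j_{n(j)}$ really do cover $S$; and the passage from the finite subcover to $\bigcup_{p_j\in S\setminus\{p_i\}}B^j_n\subseteq B_{\epsilon/2}(S)$ uses the hierarchical radius structure of Lemma \ref{lem_rhos} and Remark \ref{rem_rhos} (namely that $p_j\in B^{j_k}_{n(j_k)}$ forces $B^j_n\subseteq B_{2\rho^{j_k}_{n(j_k)}}(p_{j_k})$). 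In the setting of $\mathcal{T}$ all three ingredients can fail: if $\mathrm{Cl}(\{p_j\}_{j=1}^\infty)$ is a Cantor set with $\{p_j\}_{j=1}^\infty$ dense in it, the set of centers in $\overline{B^i_{\bar n}}$ is not closed, its closure contains points that are not centers (so the proposed cover need not cover it), and only Lemma \ref{rhos2} --- not Lemma \ref{lem_rhos} --- is available; moreover the positivity of $d_H(S,\overline{B^i_{\bar n}})$ must now come from nowhere-density of $S$ rather than from countability. The repair is exactly the tool you already deploy for $D$: take $S=\mathrm{Cl}(\{p_j\}_{j=1}^\infty)\cap\overline{B^i_{\bar n}}$, which is closed and nowhere dense so that $\eta=d_H(S,\overline{B^i_{\bar n}})>0$, and obtain $\bigcup_{p_j\in S\setminus\{p_i\}}B^j_n\subseteq B_{\eta/2}(S)$ for all large $n$ directly from $\sup_j\rho^j_n\to 0$, with no covering argument at all; then $G''=f^{-m}\big(B^i_n\setminus\overline{\bigcup_{p_j\in S\setminus\{p_i\}}B^j_n}\big)$ is a nonempty open set with $f^{-m}(p_i)\in\partial(G'')$ as before. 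With that substitution your argument coincides with the paper's.
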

\begin{proof}
	Like in the earlier cases, we show that for every $m\in\mathbb{N}$ there exists a large enough $n\in\mathbb{N}$ such that $f^{-m}(B^i_n)\setminus\bigcup_{h=1}^m\bigcup_{j\neq i}f^{-h}(B^j_n)$ contains an open subset $G$, with $f^{-m}(p_i)\in\partial G$.
	
	Since $p_i$ is not a periodic point, let$$\sigma=\min\{d(f^{-k}(p_i),f^{-h}(p_i)):~0\leq k,h\leq m,~k\neq h\}.$$There exists a $\delta>0$ such that if $d(x,y)<\delta$, it follows that $d(f^{-h}(x),f^{-h}(y))<\sigma/2$, for $h=0,\dots,m$. Therefore, the sets $\{f^{-h}(B_\delta(p_i))\}_{h=0,\dots,m}$ are pairwise disjoint. Let $\Bar{n}$ be such that $B^i_{\Bar{n}}\subseteq B_\delta(p_i)$.
	
	Let us define $T=\text{Cl}(\{p_j\}_{j=1}^\infty)\setminus B^i_{\Bar{n}}$ and $S=\text{Cl}(\{p_j\}_{j=1}^\infty)\cap \overline{B^i_{\Bar{n}}}$. Both the set are closed. First, let us consider $T$. Since $\mathcal{O}(p_i)\cap\mathcal{O}(\text{Cl}(\{p_j\}_{j=1}^\infty)\setminus\{p_i\})=\emptyset$, we can define$$\epsilon=d(f^{-m}(p_i),\bigcup_{h=0}^mf^{-h}(T))>0.$$Since $\{f^{-1},\dots,f^{-m}\}$ are equicontinuous and $\sup_i\rho^i_n\xrightarrow{n\longrightarrow\infty}0$, there exists a natural number $N_1$ such that$$\bigcup_{h=0}^m\bigcup_{p_j\in T}f^{-h}(B^j_n)\subseteq B_{\epsilon/2}(\bigcup_{h=1}^mf^{-h}(T))\text{,}\quad\quad\text{for every } n>N_1\text{.}$$Therefore, for every $n>N_1$, there exists an open set $G'\subseteq f^{-m}(B^i_n)\setminus \bigcup_{h=0}^m\bigcup_{p_j\in T}f^{-h}(B^j_n)$ such that $f^{-m}(p_i)\in G'$.
	
	By definition of $\delta$ and $\Bar{n}$, we have that$$f^{-m}(B^i_n)\setminus\bigcup_{h=0}^m\bigcup_{p_j\in S\setminus\{p_i\}}f^{-h}(B^j_n)=f^{-m}(B^i_n)\setminus\bigcup_{p_j\in S\setminus\{p_i\}}f^{-m}(B^j_n)\text{,}\quad\quad\text{for every } n>\Bar{n}.$$Let $\eta$ be the Hausdorff distance between the closed sets $S$ and $\overline{B^i_{\Bar{n}}}$; that is, $\eta=d_H(S,\overline{B^i_{\Bar{n}}})$. Since $S\subseteq\overline{B^i_{\Bar{n}}}$ and $S$ is nowhere dense, we have $\eta>0.$ Since $f^{-m}$ uniformly continuous and $\sup_i\rho^i_n\xrightarrow{n\longrightarrow\infty}0$, there exists a natural number $N_2>\Bar{n}$ such that
	$$\bigcup_{p_j\in S\setminus\{p_i\}}B^j_n\subseteq B_{\eta/2}(S)\text{,}\quad\quad\text{for every } n>N_2.$$Furthermore, since $p_i\notin \overline{B^j_n}$ for every $p_j\in S\setminus\{p_i\}$, we have $p_i\notin\bigcup_{p_j\in S\setminus\{p_i\}}\overline{B^j_n}$, but $p_i\in\overline{\bigcup_{p_j\in S\setminus\{p_i\}}B^j_n}.$ Therefore, $p_i\in\partial(\overline{\bigcup_{p_j\in S\setminus\{p_i\}}B^j_n})$. Hence, for every $n>N_2$, the set $$G''=f^{-m}\bigg(B^i_n\setminus\overline{\bigcup_{p_j\in S\setminus\{p_i\}}B^j_n}\bigg)\subseteq f^{-m}(B^i_n)$$ is a non-empty open set, because $\overline{\bigcup_{p_j\in S\setminus\{p_i\}}B^j_n}\subseteq \overline{B_{\eta/2}(S)}$ and $\overline{B^i_{\Bar{n}}}\not\subseteq\overline{B_{\eta/2}(S)}$. Moreover, $f^{-m}(p_i)\in\partial(G'')$.
	
	Finally, for every $n>\max\{N_1,N_2\}$, the set$$G=G'\cap G''$$ is an open subset of $f^{-m}(B^i_n)\setminus\bigcup_{h=1}^m\bigcup_{j\neq i}f^{-h}(B^j_n)$, such that $f^{-m}(p_i)\in\partial(G)$.
	
\end{proof}

\section{Cl$(\{p_i\}_{i=1}^\infty)$ is somewhere dense}\label{swd_closure}
In this section we consider the case in which $\{p_i\}_{i=1}^\infty$ is somewhere dense.

\begin{lem}
	For every $\{p_i\}_{i=1}^\infty$ such that Cl$(\{p_i\}_{i=1}^\infty)$ is somewhere dense, and for every $\{\rho^i_n\}_{n\in\mathbb{N}}^{i\in\mathbb{N}}$, the open dynamical system $(\mathcal{M},d,f,\mathfrak{S})$ is not completely indecisive.
\end{lem}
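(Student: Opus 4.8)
The plan is to reduce the claim to a single index. Since $\mathfrak{T}=\bigcap_{j=1}^\infty\mathfrak{T}(j)$ and a countable intersection of residual subsets of the Baire space $\mathcal{M}$ is again residual, the system can be completely indecisive only if $\mathfrak{T}(j)$ is residual for \emph{every} $j$. Hence it suffices to exhibit one center $p_{j_0}$ for which $\mathfrak{T}(j_0)$ fails to be residual; in fact I would aim for the stronger conclusion that, from some level on, the ball $B^{j_0}_n$ is never the strict first one visited, so that $\mathfrak{T}(j_0)$ is empty.

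To find $j_0$, I would first extract an open ball $U\subseteq\text{Cl}(\{p_i\}_{i=1}^\infty)$ from the somewhere-dense hypothesis. As $\mathcal{M}$ has no isolated points, $U$ is perfect, so every point of $U$---in particular every center lying in $U$---is an accumulation point of the remaining centers, and $U\subseteq D(\{p_i\}_{i=1}^\infty)$. Fix a center $p_{j_0}\in U$ and a level $\bar n$ with $\overline{B^{j_0}_{\bar n}}\subseteq U$. The inequality defining $\mathfrak{T}(j_0)$ at level $n$ fails for every orbit as soon as the first orbit point entering $B^{j_0}_n$ already lies in a competing ball $B^k_n$, $k\neq j_0$. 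This is transparent when the radii of the centers clustering around $p_{j_0}$ are comparable to $\rho^{j_0}_n$: density then yields the covering $B^{j_0}_n\subseteq\bigcup_{k\neq j_0}B^k_n$, because any $y\in B^{j_0}_n\subseteq U$ has, by density of the centers in $U$, some other center within distance $\rho^k_n$ of it; thus every entry into $B^{j_0}_n$ is simultaneously an entry into another ball and $p_{j_0}$ can never win strictly.

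The main obstacle is that the statement must hold for \emph{arbitrary} radii, and when the family $\{B^k_n\}_k$ is forced to be nearly disjoint the covering breaks down: $B^{j_0}_n$ then has a nonempty exclusive region $B^{j_0}_n\setminus\overline{\bigcup_{k\neq j_0}B^k_n}$, which is precisely the region the positive results of Sections \ref{count-closure}--\ref{nwd-closure} exploited through the strictly positive Hausdorff distance $d_H(S,\overline{B^{j_0}_{\bar n}})$. I would handle this regime by observing that disjointness of a dense family forces the radii to vary strongly with the local spacing, so that there are \emph{fine-scale} centers sitting in tight clusters whose radii are dominated, at every large level, by those of nearby \emph{coarse} centers; choosing $p_{j_0}$ among these and showing that a neighboring larger ball $B^{k(n)}_n$ is necessarily met no later than $B^{j_0}_n$ would give that $p_{j_0}$ wins only finitely often. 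Making this domination uniform over all large $n$---so that $p_{j_0}$ is beaten at a cofinal set of scales rather than merely infinitely often, and controlling the first-passage for a general homeomorphism rather than relying on geometry alone---is the crux, and the place where I expect the real work to concentrate.
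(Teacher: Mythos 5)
Your reduction to a single index $j_0$ is correct and matches the paper, but the mechanism you propose for defeating $p_{j_0}$ has a genuine gap --- one you identify yourself. The covering $B^{j_0}_n\subseteq\bigcup_{k\neq j_0}B^k_n$ does fail for arbitrary radii, and the ``fine-scale versus coarse-scale'' repair is never carried out: you concede that making the domination uniform in $n$ is ``the crux,'' which is precisely the part that would constitute the proof. You are also aiming at too strong a conclusion. You do not need $\mathfrak{T}(j_0)$ to be empty (and it likely is not in general: when the competing balls are nearly disjoint, the exclusive region $B^{j_0}_n\setminus\bigcup_{k\neq j_0}B^k_n$ is nonempty and some orbits may enter it first); you only need $\mathfrak{T}(j_0)$ to be of the first category, since $\mathfrak{T}\subseteq\mathfrak{T}(j_0)$ and a first category set in a complete metric space is not residual.

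The idea you are missing is that \emph{density}, not covering, suffices --- and density is free, independently of the radii. Fix the open set $U\subseteq\text{Cl}(\{p_i\}_{i=1}^\infty)$, a center $p_{j_0}\in U$, and $M$ with $B^{j_0}_n\subseteq U$ for all $n>M$. For each such $n$, the centers $p_k$ with $k\neq j_0$ are dense in $B^{j_0}_n$ (the $p_k$ are pairwise distinct and dense in $U$), and each such $p_k$ lies in its own ball $B^k_n$ no matter how small $\rho^k_n$ is. Hence $\bigcup_{k\neq j_0}B^k_n$ is open and dense in $B^{j_0}_n$, so the exclusive region $E_n=B^{j_0}_n\setminus\bigcup_{k\neq j_0}B^k_n$ is nowhere dense. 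Any point whose orbit strictly first visits $B^{j_0}_n$ at time $m$ must have $f^m(x)\in E_n$, so it lies in $f^{-m}(E_n)$, which is nowhere dense because $f^{-m}$ is a homeomorphism. Thus each $C_n$ is a countable union (over $m$) of nowhere dense sets, hence of the first category, and $\mathfrak{T}(j_0)\subseteq\bigcup_{n>M}C_n$ is of the first category. This is exactly the paper's argument: no case distinction on the radii, no control of first-passage times, and no geometric comparison of ball sizes is needed.
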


\begin{proof}
	Take $\{p_i\}_{i=1}^\infty\in\ell_\infty(\mathcal{M})$. Suppose that there exists an open subset $B$ such that $B\subseteq$Cl$(\{p_i\}_{i=1}^\infty)$. Let $p_i\in B$. Thus, there exists $M\in\mathbb{N}$ such that $B^i_M\subseteq B$, and for every $n> M$, $B^i_n\subseteq B$. Fix $n>M$, and let $\mathfrak{C}_n=\{j\in\mathbb{N}:p_j\in B^i_n\}$. Then, $B^i_n\subseteq\text{Cl}(\bigcup_{j\in\mathfrak{C}_n}B^j_n)$, and $B^i_n\setminus\bigcup_{j\in\mathfrak{C}_n}B^j_n$ is nowhere dense. This is true for every $n>M$. Therefore,
	$$\bigcup_{n>M}\bigcup_{m\in\mathbb{N}}\bigg( f^{-m}(B^i_n)\setminus\bigcup_{h=0}^m\bigcup_{j\neq i} f^{-h}(B^j_n)\bigg)$$is of the first category.
	It follows that
	$$\mathfrak{T}(i)=\bigcap_{M\geq 1}\bigcup_{n> M}C_n$$ is of the first category, where $C_n=\bigcup_{m\in\mathbb{N}}\big( f^{-m}(B^i_n)\setminus\bigcup_{h=0}^m\bigcup_{j\neq i} f^{-h}(B^j_n)\big)$, and $\mathfrak{T}$ is not a residual set.
\end{proof}


\bibliographystyle{plain}
\bibliography{bib_nowhere_dense_holes}

\end{document}